\numberwithin{equation}{section}
\newtheorem{thm}{Theorem}[section]
\newtheorem{lem}[thm]{Lemma}
\newtheorem{cor}[thm]{Corollary}
\newtheorem{defi}[thm]{Definition}
\newtheorem{rem}[thm]{Remark}
\newcommand{\be}{\begin{equation}}
\newcommand{\ee}{\end{equation}}
\newcommand{\bea}{\begin{eqnarray*}}
\newcommand{\eea}{\end{eqnarray*}}
\newcommand{\eqby}[1]{\mathrel{\stackrel{#1}{=}}}
\newcommand{\eqbydef}{\mathrel{\stackrel{\hspace*{-3mm}\text{\tiny(def)}\hspace*{-3mm}}{=}}}
\newcommand{\leqby}[1]{\mathrel{\stackrel{#1}{\leq}}}
\newcommand{\deq}{\mathrel{\mathop:}=}
\newcommand\numberthis{\addtocounter{equation}{1}\tag{\theequation}} 
\newcommand{\ofN}{}
\newcommand{\biggg}{\bBigg@\thr@@}
\newcommand{\Biggg}{\bBigg@{3.5}}
\newcommand{\Rmnum}[1]{\expandafter\@slowromancap\romannumeral #1@}
\renewcommand\theequation{{\thesection}%
                   .{\arabic{equation}}}
\newcommand{\wt}{\widetilde}
\newcommand{\beqa}{\begin{eqnarray}}
\newcommand{\eeqa}{\end{eqnarray}}
\newcommand{\eps}{\varepsilon}
\newcommand{\rd}{{\rm d}}
\renewcommand{\Im}{\mathsf{Im}\,}
\newcommand{\supp}{\mathsf{supp}\,}
\newcommand{\im}{{\mathfrak{Im} \, }}
\newcommand{\E}{{\mathbb E }}
\newcommand{\R}{{\mathbb R }}
\newcommand{\N}{{\mathbb N}}
\renewcommand{\P}{{\mathbb P}}
\newcommand{\C}{{\mathbb C}}
\newcommand{\e}{\varepsilon}
\begin{document}
\title{Bounds on the Norm of Wigner-type Random Matrices}
\author{L\'aszl\'o Erd{\H{o}}s \and Peter M{\"u}hlbacher}
\thanks{Partially supported by ERC Advanced Grant RANMAT No. 338804}
\address{\textsc{IST Austria, A-3400 Klosterneuburg}}\author{\vspace{-1.2cm}}
\email{\url{lerdos@ist.ac.at}}
\email{\url{peter@muehlbacher.me}}
\subjclass[2010]{60B20, 60J10}
\date{\today}
\maketitle

\begin{abstract}
	We consider a Wigner-type ensemble, i.e. large hermitian  $N\times N$ random matrices $H=H^*$ with centered independent entries and with a general matrix of variances  $S_{xy}=\E|H_{xy}|^2$. The norm of $H$ is asymptotically  given by the maximum of the support of the self-consistent density of states. We establish a bound on this maximum in terms of norms of powers of $S$ that substantially improves the earlier bound $2\| S\|^{1/2}_\infty$ given in \cite{ELK16}. The key element of the proof is an effective Markov chain approximation for the contributions of the weighted Dyck paths appearing in the iterative solution of the corresponding Dyson equation.
\end{abstract}

\section{Introduction and the main result}

Large hermitian random matrices with independent entries tend to exhibit deterministic patterns.
In particular, the empirical density of eigenvalues typically converges to a deterministic density profile, $\rho$, called the {\it self-consistent density of states}, that can be determined by solving a system of quadratic equations. Under very general conditions, $\rho$ is compactly supported and the largest eigenvalue  of the random matrix is  asymptotically given by the maximum  of the support of $\rho$. 

In the simplest case of $N\times N$ Wigner matrices, i.e. when $H$ has centered, identically distributed entries that are independent  (up to the symmetry constraint $H=H^*$), the self-consistent density of states is given explicitly by the Wigner semicircle law. Under the customary normalization $\E |H_{xy}|^2= N^{-1}$, the semicircle distribution is supported in $[-2,2]$. With very high probability (and also almost surely) the Euclidean matrix norm tends to 2, i.e.  $\| H\|_2\to 2$ as $N\to \infty$, assuming the fourth moment of $\sqrt{N}H_{xy}$ is finite \cite{BY88}.

In this paper we consider {\it Wigner type} matrices introduced in  \cite{ELK16}. These are generalizations of the Wigner ensemble where independence of the matrix elements is retained but their distribution may vary within the matrix. We assume  $\E H=0$.
The relevant parameter of the model is the matrix of variances 
$$
  S=(S_{xy})_{x,y=1}^N, \qquad S_{xy}:  =\E |H_{xy}|^2.
$$
The self-consistent density of states is obtained via the  solution of a system of quadratic equations
\be\label{Qve}
      -\frac{1}{m_x} = z + \sum_{y=1}^N S_{xy} m_y\quad\quad\quad x=1,2,\dots,N,
\ee
where $z\in {\mathbb H}$ is a complex spectral parameter in the upper half plane. This equation was extensively studied in \cite{QVE, QVEMem}.
Under the additional condition that $\im m_x> 0$, the solution to \eqref{Qve} is unique and it depends analytically on $z\in  {\mathbb H}$. Its average, 
\be\label{def:m}
 m(z): =\frac{1}{N}\sum_{x=1}^N m_x (z)
\ee
is the Stieltjes transform of a probability density $\rho$. This relation defines the self-consistent density of states measure that can be obtained by inverting the Stieltjes transform as
\be\label{def:rhoN} \rho (\rd\tau)\deq\lim_{\eta\downarrow 0}\frac{1}{\pi N}\sum_{x=1}^N\Im m_x(\tau+i\eta)\rd\tau.
\ee
A simple symmetry argument shows that $m(-\bar z) = -\overline{m(z)}$ and thus $\rho$ is an even measure.  We remark that under  additional assumptions on $S$, the measure $\rho$ is absolutely continuous with a H\"older continuous density except at $\tau=0$, where it may have a Dirac delta component (Corollary 7.4 \cite{QVEMem}). 
Note that $m_x,  m, \rho$ as well as $S$ depend on $N$,  i.e. $m_x=m_x^{(N)}$ etc., but this dependence will sometimes be omitted from the notation.

Under very general conditions on $S$ and some higher moment assumption of $H$, it is well known that the empirical density of eigenvalues of $H$ is asymptotically given by $\rho$. This holds not only on the global scale, but even on very small  scales  slightly above the typical eigenvalue spacing; these statements are called {\it local laws} for Wigner-type matrices (Theorem 1.7 of \cite{ELK16}). 

Local laws  are typically not  sensitive to individual eigenvalues except at the spectral edges, where a stronger version of the local law holds. Therefore,  the maximum of the support of $\rho$ correctly describes the largest eigenvalue  or the norm of $H$ (see \cite{AEKN} for a quite general setup). 
In particular, the norm of a Wigner matrix with the above normalization converges to 2. The speed of convergence has been addressed in several papers in increasing generality, see e.g. \cite{BY88,Vu05,BS12,EKY13}. Similarly, the norm of a Wigner-type matrix converges to $\max \supp \rho$. For a general variance matrix $S$, neither $\rho$ nor its support can be computed explicitly; our current goal is to give a good bound on $\max\supp\rho$.
A relatively simple argument  (see Proposition 2.1 \cite{ELK16}) gives 
\be\label{trivbound}
\max \supp \rho\le 2 \| S\|^{1/2},
\ee
 where for any matrix $M$ we let $\|M\|:=\| M\|_\infty:= \max_x \sum_y |M_{xy}|$ denote the matrix norm induced by the maximum norm on $\C^N$. In this paper $\|\cdot\|$ always denotes this  maximum norm.

Our  main theorem  considerably improves the bound \eqref{trivbound} and it is still expressed in terms of norms of powers of $S$. 

\begin{thm}\label{thm:main} Let $S$ be a variance matrix, i.e. a symmetric $N\times N$ matrix with nonnegative entries. Let $\rho$ be the self-consistent density of states obtained from the unique solution of \eqref{Qve} via inverse Stieltjes transform \eqref{def:rhoN}. Set 
\be\label{eq:def_zj}
z_j\deq \frac{\|S^j\|}{\|S\|^j}
\ee
for any $j\in \N$. Then for any fixed $J\in\N\cup \{ \infty\}$ we have
\begin{equation}\label{newbound}
	\max\supp\rho\leq 2\frac{\|S\|^\frac{1}{2}}{w_c(J)},
\end{equation}
where $w_c(J)$ is the smallest  positive root of 
the function
\be\label{delphi}
\phi_J(w):=1-\frac{w}{2}\Big(1+\sum_{j=1}^J\left(\frac{w}{2}\right)^jz_j+\sum_{j>J}\left(\frac{w}{2}\right)^j\Big).
\ee
\end{thm}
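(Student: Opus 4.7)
The plan is to reduce the problem to a radius-of-convergence estimate for the moment generating function attached to the QVE, then to close a self-consistent scalar inequality for it in terms of the norms $\|S^j\|$.

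\textit{Step 1 (Setup).} The large-$|z|$ expansion of the QVE~\eqref{Qve} yields $-zm_x(z)=\sum_{n\ge 0}M_n(x)w^n$ with $w=z^{-2}$, where $M_0(x)=1$ and the non-negative coefficients obey the Catalan-type recursion
\begin{equation*}
  M_n(x)=\sum_{k+l=n-1}\sum_yS_{xy}M_k(x)M_l(y),\qquad n\ge 1,
\end{equation*}
so that $M_n(x)$ is the weighted count of rooted plane trees (equivalently Dyck paths) of $n$ edges rooted at $x$ with weight $\prod_{(u,v)}S_{uv}$ over the tree edges. Since $\rho=N^{-1}\sum_x\rho_x$ where $\rho_x$ is the spectral measure whose Stieltjes transform is $m_x$, and $M_n(x)=\int\tau^{2n}\rho_x(\rd\tau)$, we have $\max\supp\rho\le\sup_x\limsup_nM_n(x)^{1/(2n)}$. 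It therefore suffices to prove that $F_x(w)\deq\sum_nM_n(x)w^n$, which satisfies the fixed-point equation $F_x=1+wF_x(SF)_x$ with $(SF)_x\deq\sum_yS_{xy}F_y$, has radius of convergence at least $w_c(J)^2/(4\|S\|)$ uniformly in $x$.

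\textit{Step 2 (Scalar self-consistent inequality).} Set $\tilde F\deq\max_xF_x(w)$. Substituting $F_y=1+wF_y(SF)_y$ into $(S^jF)_x=\sum_y(S^j)_{xy}F_y$ and bounding $F_y\le\tilde F$ together with $\sum_y(S^j)_{xy}\le\|S^j\|$ gives the key inductive inequality
\begin{equation*}
  (S^jF)_x\le\|S^j\|+w\tilde F\,(S^{j+1}F)_x,\qquad j\ge 1.
\end{equation*}
Iterating it from $j=1$ up through $j=J$ and continuing indefinitely with the submultiplicative bound $\|S^k\|\le\|S\|^k$ for $k>J$---legitimate as long as $\beta\deq w\|S\|\tilde F<1$, since the geometric remainder $\tilde F\|S\|^J\beta^K$ then vanishes as $K\to\infty$---produces
\begin{equation*}
  (SF)_x\le\|S\|\Big(\sum_{j=1}^Jz_j\beta^{j-1}+\frac{\beta^J}{1-\beta}\Big).
\end{equation*}
Feeding this back into $F_x=1+wF_x(SF)_x$ and taking the maximum over $x$ gives
\begin{equation*}
  \tilde F\le f(\beta),\qquad f(\beta)\deq 1+\sum_{j=1}^Jz_j\beta^j+\sum_{j>J}\beta^j,\qquad \beta=w\|S\|\tilde F.
\end{equation*}

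\textit{Step 3 (Extracting the bound).} By direct inspection $\phi_J(w)=1-(w/2)f(w/2)$, so the smallest positive root of $\phi_J$ is characterized by $\beta_cf(\beta_c)=1$ with $\beta_c\deq w_c(J)/2$. Since $\phi_J(0)=1$ and $\phi_J(w)\to-\infty$ as $w\uparrow 2$, one has $w_c(J)\in(0,2)$ and hence $\beta_c\in(0,1)$, which validates the iteration. Setting $w_\ast\deq w_c(J)^2/(4\|S\|)$ and $X_\ast\deq 2/w_c(J)=1/\beta_c=f(\beta_c)$, a direct substitution gives $w_\ast\|S\|X_\ast=\beta_c$, so $X_\ast$ is a positive root of the scalar equation $X=f(w_\ast\|S\|X)$. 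Because $\tilde F(0)=1$ and $\tilde F$ is continuous and increasing on $[0,R)$ (where $R=\inf_xR_x$), a standard monotonicity argument using the non-negativity of $M_n(x)$ shows that $\tilde F$ tracks the lower branch of $X=f(w\|S\|X)$; the existence of this branch at $w_\ast$ then forces $\tilde F(w_\ast)\le X_\ast<\infty$, hence $R\ge w_\ast$, which is the announced bound.

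\textit{Main obstacle.} The technical heart is step 2: the bound $F_y\le\tilde F$ used inside the identity $(S^jF)_x=\sum_y(S^j)_{xy}+w\sum_y(S^j)_{xy}F_y(SF)_y$ looks crude, yet must be organized so that the improvement $\|S^j\|$ versus $\|S\|^j$ survives. Combinatorially this is the ``effective Markov chain approximation'' mentioned in the abstract: the factor $(S^j)_{xy}$ has already aggregated the weights of all $j$-step trajectories of the sub-stochastic kernel $S/\|S\|$, while the trailing $F_y(SF)_y$ only records strictly later subtree excursions of the Dyck path. One must verify that iterating from $j=1$ to $j=J$ corresponds to grouping the monotone runs of length $\le J$ of the Dyck path into single blocks without double-counting edges, that the geometric tail beyond $J$ is faithfully dominated by $\|S\|^k$, and that the subsequent monotonicity argument really selects the lower branch of the scalar equation. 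This bookkeeping, together with the compatibility of the change of variables $\beta=w\|S\|\tilde F$ with the analytic continuation in $w$, is what I expect to require the most care.
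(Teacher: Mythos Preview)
Your approach is correct and, in fact, substantially simpler than the paper's. Both start from the same Dyck-path/tree expansion (your Step~1 is the paper's Lemma on the graphical representation of $m_x$), but then diverge sharply.

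The paper estimates $\sum_{\Gamma\in\mathcal T_k} val(\Gamma)$ by first bounding each $val(\Gamma)$ via a combinatorial ``chopping'' of the tree into linear pieces, obtaining $val(\Gamma)\le\|S\|^k z^{U(\pi)}$ (or $z^{D(\pi)}$, depending on a threshold height), where $U(\pi)$ records the up-run lengths of the corresponding Dyck path. It then must control the average $\E z^{U(\pi)}$ over the \emph{uniform} Dyck-path measure. Since that measure is not a simple random walk, the paper builds an elaborate apparatus---a level-dependent splitting with security zones, a pigeonhole argument over thresholds, an approximation of the Dyck-path transition probabilities by simpler Markov kernels, and finally Holley's correlation inequality to dominate everything by the simple random walk---only to compute $\limsup_n(\E_{1/2}z^{U(\pi^{(n)})})^{1/n}=1/w_c(J)$ via a geometric-stopping generating function. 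You bypass all of this: the single inequality $(S^jF)_x\le\|S^j\|+w\tilde F\,(S^{j+1}F)_x$ already packages the same gain (a factor $\|S^j\|$ rather than $\|S\|^j$ along each step of the rightmost spine of the tree) and, being a recursion for $\tilde F$ itself, automatically resums over all trees. Your closed scalar inequality $\tilde F\le f(w\|S\|\tilde F)$ then lands on exactly the same function $\phi_J$. The paper's probabilistic machinery is, from your vantage point, an indirect route to the same fixed-point equation; what it might retain is finer pathwise control that could conceivably be sharpened further, whereas you trade that granularity for a short analytic proof.

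Two small points. First, Step~3 is correct but the justification is sketchy: the cleanest way to make ``$\tilde F$ tracks the lower branch'' rigorous is a coefficient-wise majorant. Your Step-2 inequalities hold coefficient-wise if one replaces $\tilde F$ by the series $G(w)=\sum_n(\max_xM_n(x))w^n$, yielding $G\preceq f(w\|S\|G)$; since the degree-$n$ coefficient of the right-hand side depends only on $G_0,\dots,G_{n-1}$, strong induction gives $G\preceq H$ for the formal power-series solution $H=f(w\|S\|H)$ with $H_0=1$, and the radius of convergence of $H$ is exactly the point $w_{**}\ge w_\ast$ where the two branches of $X=f(w\|S\|X)$ coalesce. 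Second, your ``Main obstacle'' paragraph misidentifies the connection to the paper: the iteration $j\to j+1$ walks down the \emph{rightmost spine} of the tree, not along monotone runs of the Dyck path, and it is not the paper's Markov-chain approximation---it is a replacement for it.
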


It is easy to see that $w_c(J)$ is an increasing function of $J$, so a choice of larger $J$ yields a better bound.
In particular $J=\infty$ is the best.
On the other hand, larger $J$  is more computation intensive as it requires to  
compute norms of higher powers of $S$.

Comparing  \eqref{newbound} with \eqref{trivbound}, notice that 
the main source of the improvement is the simple fact that 
the inequality  $\| S^j\|\le \| S\|^j$ rarely saturates. 
Indeed, it is easy to see  that
$w_c$  is a strictly monotonically decreasing function of  all  $z_j$. If
all $\| S^j\|$ norms were
replaced with $\|S\|^j$, i.e. we set $z_j=1$, then $w_c(J)=1$ for any $J$
and the two bounds were identical. Once $z_j<1$ for some $j$,
we have $w_c>1$.

In the Appendix we illustrate in an example the effect of the improvement and compare it with the exact value of $\max \supp \rho$ obtained numerically.


Combining Theorem~\ref{thm:main} with Corollary 2.3 of \cite{EKS} (or Theorem 4.7 of \cite{AEKN})
on the convergence of the 
largest eigenvalue of the Wigner type matrix and using that $w_c(J)$ 
depends continuously and monotonically on $z_j$, we immediately obtain the following

\begin{cor} Let $H=H^{(N)}$ be a sequence of  hermitian $N\times N$ Wigner type matrices, with  centered entries and
matrix of variances $S_{xy} =S_{xy}^{(N)}= \E |H_{xy}|^2$.  Assume that  $S_{xy}\le C^*/N$ for some constant $C^*$, independent of $N$.
Further, we assume a finite moment condition on the matrix elements,
 i.e. that for any $q\in \N$ there is a constant $C_q$, independent of $N$, such that
$$
      \max_{x,y,N}\E \big( \sqrt{N} |H_{xy}| \big)^q \le C_q.
$$
Set 
$$
 z_j:= \limsup_{N\to\infty}  \frac{\|[S^{(N)}]^j\|}{\|S^{(N)}\|^j}
$$
and for any $J\in \N \cup\{\infty\}$ let $w_c(J)$ be the smallest positive root of $\phi_J$ defined in \eqref{delphi}.
Then 
for any $\epsilon>0$ (small) and any $D>0$ (large) we have the following bound on  the largest eigenvalue of $H$:
$$
  \P \Big(  |\lambda_{max} (H^{(N)})|\ge 2\frac{\|S^{(N)}\|^\frac{1}{2}}{w_c(J)} +\epsilon\Big)  \le C(\epsilon, D) N^{-D}
 $$
 for some constant $C(\epsilon, D)$ depending only on $C^*$ and the sequence of constants $C_q$, in addition to $\epsilon$ and $D$.
\end{cor}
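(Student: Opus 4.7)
The plan is to combine three ingredients: (i) the deterministic bound $\max\supp\rho^{(N)}\le 2\|S^{(N)}\|^{1/2}/w_c^{(N)}(J)$ from Theorem~\ref{thm:main} applied to each matrix in the sequence, where $w_c^{(N)}(J)$ is the smallest positive root of $\phi_J$ built from the finite-$N$ ratios $z_j^{(N)}:=\|[S^{(N)}]^j\|/\|S^{(N)}\|^j$; (ii) the monotonicity and continuity of the map $(z_j)_j\mapsto w_c(J)$, which will let us pass from the $N$-dependent bound to one expressed through the $\limsup$-values $z_j$ in the statement of the corollary; and (iii) the probabilistic "no eigenvalue outside the deterministic support'' estimate supplied by Corollary~2.3 of \cite{EKS} or Theorem~4.7 of \cite{AEKN}, which under the given variance bound $S_{xy}\le C^\ast/N$ and the moment hypotheses on $\sqrt N H_{xy}$ yields, for any $\epsilon'>0$ and $D>0$,
\[
 \P\Big(|\lambda_{\max}(H^{(N)})|\ge \max\supp\rho^{(N)}+\epsilon'\Big)\le C(\epsilon',D)\,N^{-D}.
\]

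First I would fix $\epsilon>0$ and set $\epsilon':=\epsilon/2$. Applying Theorem~\ref{thm:main} to the deterministic matrix $S^{(N)}$ immediately gives the finite-$N$ edge inequality above. The next task is to compare $w_c^{(N)}(J)$ with the $w_c(J)$ of the corollary. By the definition of $\limsup$, for any $\delta>0$ there exists $N_0(\delta,J)$ such that $z_j^{(N)}\le z_j+\delta$ for all $N\ge N_0$ and all $1\le j\le J$ when $J<\infty$; the case $J=\infty$ is reduced to a finite-$J$ comparison by truncating the series in $\phi_J$, since $z_j^{(N)}\le 1$ always and $(w/2)^j$ provides a summable geometric tail uniformly on any compact sub-interval $w\in[0,W]$ with $W<2$, so the truncation error in $\phi_J$ is arbitrarily small uniformly in $N$.

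Next I would invoke the (strict) monotonicity of $w_c$ in each parameter $z_j$: because $\phi_J(w)$ is strictly decreasing in each $z_j$ for $w>0$, its smallest positive root $w_c$ is a non-increasing function of each $z_j$. Hence $w_c^{(N)}(J)\ge w_c(J;z_j+\delta)$, where the right-hand side denotes the root computed with the bumped parameters. Continuity of this root in $(z_j)_j$ follows from the implicit function theorem: at the smallest positive root $\phi_J'(w_c)\ne 0$ (since $\phi_J(0)=1>0$ and $\phi_J$ is smooth), so $w_c(J;z_j+\delta)\to w_c(J)$ as $\delta\downarrow 0$. Combining these facts and using the uniform bound $\|S^{(N)}\|\le C^\ast$ (a direct consequence of $S_{xy}\le C^\ast/N$), I would choose $\delta$ small enough that
\[
 \frac{2\|S^{(N)}\|^{1/2}}{w_c^{(N)}(J)} \le \frac{2\|S^{(N)}\|^{1/2}}{w_c(J)}+\frac{\epsilon}{2}
\]
for all $N\ge N_0$. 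Feeding this into the probabilistic estimate with threshold $\epsilon/2$ gives the claim for all $N\ge N_0$; adjusting the constant $C(\epsilon,D)$ absorbs the finitely many small $N$.

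The main obstacle is step~(ii): making the continuity-and-monotonicity passage from $z_j^{(N)}$ to $z_j=\limsup_N z_j^{(N)}$ quantitatively compatible with the additive error $\epsilon$ in the statement, and, when $J=\infty$, controlling the entire infinite collection of parameters simultaneously. The geometric structure of $\phi_J$ together with $z_j^{(N)}\in[0,1]$ makes this technical but not serious; once the implicit function theorem argument is in place at the root $w_c(J)$, everything reduces to standard $\epsilon/\delta$ bookkeeping.
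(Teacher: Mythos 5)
Your proposal is correct and follows exactly the route the paper sketches in its one-sentence justification (Theorem~\ref{thm:main} applied at finite $N$, plus continuity and monotonicity of $w_c$ in the $z_j$'s, plus the tail bound on $|\lambda_{\max}|$ from \cite{EKS} or \cite{AEKN}); you have merely filled in the $\epsilon/\delta$ bookkeeping that the paper omits. One small caveat: the claim that $\phi_J'(w_c)\neq 0$ is better justified by observing that $\phi_J$ is strictly decreasing in $w>0$ (every non-constant term has a negative $w$-derivative), rather than by "$\phi_J(0)=1>0$ and smoothness," which by itself does not preclude a tangential root.
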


Instead of Wigner type matrices, one may also consider Gram matrices, i.e., matrices of the form $H= XX^*$ where
$X$ is an $M\times N$ matrix with  centered indepedent entries (without any symmetry condition) and 
 $S_{ij}:= \E |x_{ij}|^2$  is the matrix of variances. The spectral radius of $H$ is the
 square of the spectral radius of the linearized matrix
 $$
   \mathcal{X}:= \begin{pmatrix} 0 & X \cr X^* & 0 \end{pmatrix}  \quad \mbox{with variance matrix} \quad  {\mathcal S}: = \begin{pmatrix} 0 & S \cr S^t & 0 \end{pmatrix}.
$$
Since $ \mathcal{X}$ is a Wigner type matrix, Theorem~\ref{thm:main} and its corollary directly applies. The
norm $\|\mathcal{S}^j\|$ can be trivially expressed in terms of the norms of matrices of the form $SS^*SS^*...$
and $S^*SS^*S....$.

We remark that very similar questions were studied  independently in a recent  work of M. Ottolini \cite{Otto} who derived a
variational formula for $\max \supp \rho$ and proved the convergence of the largest
eigenvalue to it. This formula is exact, but not explicit in terms of $S$
as it still requires to solve a variational problem. It is an open question to establish  connections
between the  two approaches, especially find an explicit formula, if possible, in terms of $S$  for the solution 
of Ottolini's variational problem.

\bigskip
  
We now explain the main novelty of our approach. We introduce a tree-graph expansion for representing the solution to \eqref{Qve}. 
Unlike in the traditional  proof of the Wigner semicircle law via the moment method, in our case the graphs do not contribute equally; they are weighted by factors of $S_{xy}$ assigned to edges. This defines an $S$-dependent measure ${\mathcal P}_S$ on the space of trees.

We  then estimate the contribution of each tree by chopping  it up into possibly long linear segments. Along the linear pieces, we can perform the summation $\sum_{xyz...uv} S_{xy}S_{yz} ... S_{uv} = (S^j)_{xv}$ explicitly.
This enables us to use the stronger bound $\| S^j\|$ instead of the trivial one $\| S\|^j$.
We present an algorithm for a good chopping. We then compute the expected value of the  corresponding contributions with respect to the measure ${\mathcal P}_S$. It turns out that the relevant regime is the limit as the size of the trees goes to infinite. In this limit we approximate the measure  ${\mathcal P}_S$ by a Markov chain  for the purpose of computing the weighted contributions of all graphs. The approximate Markov structure becomes apparent as we identify the tree graphs with Dyck paths. Finally, in the Markov model we can compute the answer explicitly.

\section{Trees and Dyck paths}

We start with a simple observation that allows us to express $\max\supp\rho$ in terms of the radius of convergence of the Laurent series expansion of the Stieltjes transform of $\rho$.

	Let $\rho$ be a compactly supported, symmetric
	probability measure on the real line with  $\supp\rho\subset [-r, r]$ for  $r :=\max\supp \rho >0$.
	Clearly its Stieltjes transform
 $$m(z)\deq\int_\R\frac{\rho(\rd \tau)}{\tau -z}$$  is analytic on $\C\setminus \supp\rho$. Its Laurent series is written
 as
 $$
      m(z) = -\frac{1}{z} \sum_{k=0}^\infty \Big(\frac{1}{z}\Big)^k \mu_k, \qquad \mu_k:=\int_\R \tau^k \rho(\rd \tau).
 $$
By the Cauchy-Hadamard theorem on the radius of convergence of this power series we immediately  obtain 
\be\label{maxbound}
     \max\supp \rho = \limsup_{k\to\infty}  \mu_k^{1/k}.
\ee
      
We can apply the relation \eqref{maxbound} not only for 
 $m(z)$ defined in \eqref{def:m}, but also for each $m_x(z)$ since it is 
  the Stieltjes transform of some probability measure $\rho_x$ (see, e.g. Theorem 2.1 \cite{QVEMem}).
 Clearly $\rho_x$ is also symmetric and $\rho =\frac{1}{N}\sum_x \rho_x$, thus 
 \be\label{sups}
 \supp\rho = \bigcup_{x=1}^N \supp\rho_x.
 \ee
  In particular, 
 if $|z|> \max\supp\rho$, then not only $m(z)$ has a convergent Laurent series, but each $m_x$ as well:
 \be\label{mx}
 m_x(z) = -\frac{1}{z} \sum_{k=0}^\infty \Big(\frac{1}{z}\Big)^k \mu_{x,k}, \qquad \mu_{x,k}:=\int_\R \tau^k \rho_x(\rd \tau).
 \ee
Similarly, we have
\be\label{maxibound}
     \max\supp \rho = \max_x\max\supp\rho_x = \max_x \limsup_{k\to\infty}  \mu_{x,k}^{1/k}.
\ee
To estimate $\limsup_{k\to\infty} \mu_{x,k}^{1/k}$, we will express $\mu_{x,k}$ in terms of sums of products of the matrix elements of $S$. Similarly to the standard proof of the Wigner semicircle law by the moment method (e.g. Section 2.1 \cite{AGZ}), we represent these sums diagrammatically, via an expansion in terms of Dyck paths.
Since the paths are weighted by $S$, the estimate is not a simple combinatorial enumeration of the Dyck paths. We will see that these weights substantially distort the uniform counting measure on the set of Dyck paths. In the next sections we develop a formalism to bookkeep  and effectively estimate these weights.
In what follows we will use the notations
$$[a,b] = \{i\in\N : a\leq i\leq b\},
	\quad\quad
  [a,b) = \{i\in\N : a\leq i\leq b-1\}.$$

\subsection{Dyck Path}\label{subsec:dyckpaths}

We start with recalling the definition of the Dyck paths:

\begin{defi} Dyck paths of length $2k$ are paths $\pi: [0,2k] \to \N$ such that
$\pi(0)=\pi(2k)=0$, $ |\pi(i)-\pi(i+1)|=1$. Denote the set of Dyck paths of length $2k$ by $D_{2k}$. We say that the $i$-th step is an {\it up-run} if $\pi(i)<\pi(i+1)$ and a {\it down-run} otherwise. 
\end{defi}

Alternatively, Dyck paths encode algebraically legitimate bracketing of a product of $2k$ non-associative symbols in a straightforward manner.
It is sufficient to bookkeep the brackets only. Thus we consider a string consisting of $k$ opening and $k$ closing brackets in such a way that for every {\it up-run} we append a "(" to the string, for every {\it down-run} append a ")".

We now recall the {\it tree representation} of the Dyck paths.				
Let $\mathcal T_k$ denote the set of planar, rooted, undirected trees $\Gamma=\big(V(\Gamma),E(\Gamma)\big)$ with $|E(\Gamma)|=k$.

We always draw a tree in the plane in such a way that the vertices at the same distance from the root are drawn at the same height (horizontal level) relative to the root and the root is the lowest point (Figure \ref{fig:labelledTreeAndDyckPathRepresentation}). The height function is denoted by $h(v)$ for all $v\in V(\Gamma)$, $\Gamma=(V(\Gamma),E(\Gamma))\in\mathcal T_k$; we set $h(\text{root})=0$. 
In this way every vertex $v$ (apart from the root) has a unique {\it father}, i.e. an adjacent vertex of height $h(v)-1$, and may have some {\it children}; these are adjacent vertices with height $h(v)+1$, whose number we denote by $c(v)$. Unless $v$ is a root, we have $c(v)=d(v)-1$, where $d(v)$ is the {\it degree} of $v$, i.e. the number of adjacent vertices. Vertices with no children are called {\it leaves}.

Every edge $e\in E(\Gamma)$ has two vertices adjacent to it, denoted by $e_-, e_+ \in V(\Gamma)$, the sign indicating their relative  height, i.e. $h(e_-)< h(e_+)$. We also extend the height function to edges by setting $h(e)\deq h(e_+)$.

The planarity imposes an orientation on every tree. In particular, it is possible to walk around the outer boundary of $\Gamma$ (say, in clockwise direction) starting and arriving at the root. 
In this way, for any element $\Gamma$ of $\mathcal T_k$ we can assign an element $\pi(\Gamma)$ of $D_k$ in such a way that we set $\pi(i)$ to be the distance to the root at the $i$-th step of this walk. This map is clearly a bijection for each fixed $k$. We define 
$\Gamma:\bigcup_{k=0}^\infty D_{2k}\to\bigcup_{k=0}^\infty \mathcal T_k$ to be  the inverse of this map. For any Dyck path $\pi$, we call $\Gamma(\pi)$ ``the tree corresponding to $\pi$" (see Figure \ref{fig:labelledTreeAndDyckPathRepresentation} for an example).
					

A finite collection of several disjoint trees is called {\it forest}. The set of forests with a total of $k$ edges is denoted by $\mathcal F_k\deq\big\{\{\Gamma_i\}_{i} : \Gamma_i\in\mathcal T_{m_i} \text{ with }\sum_i m_i=k\big\}$.
Every component $\Gamma_i$ has a single root which is drawn as its lowest vertex.
For any forest $\Gamma$, the set of roots is denoted by $R(\Gamma)\subseteq V(\Gamma)$.
For general forests not all roots will be drawn at the same horizontal level.
The vertices are drawn as ``bullets" with the convention that roots are unfilled $(\circ)$ and filled $(\bullet)$ otherwise.


\begin{figure}[h]
	\begin{tikzpicture}[baseline=8ex]
		\node at (-0.3,0) {$a$};
		\node at (-0.3,1) {$b$};
		\node at (-0.3,2) {$c$};
		\node at (-0.7,3) {$d$};
		\node at (0.7,3) {$e$};
		\node at (0.7,2) {$f$};
		\draw[thick] (0,0) -- (0,1) -- (0,2) -- (-1,3);
		\draw[thick] (0,2) -- (1,3);
		\draw[thick] (0,1) -- (1,2);
		\draw[fill=white] (0,0) circle (3pt);
		\draw[fill=black] (0,1) circle (3pt);
		\draw[fill=black] (0,2) circle (3pt);
		\draw[fill=black] (-1,3) circle (3pt);
		\draw[fill=black] (1,3) circle (3pt);
		\draw[fill=black] (1,2) circle (3pt);
	\end{tikzpicture}=	
	\begin{tikzpicture}[baseline=8ex]
		\draw[fill=black] (0,0) circle (3pt);
		\draw[fill=black] (1,1) circle (3pt);
		\draw[fill=black] (2,2) circle (3pt);
		\draw[fill=black] (3,3) circle (3pt);
		\draw[fill=black] (4,2) circle (3pt);
		\draw[fill=black] (5,3) circle (3pt);
		\draw[fill=black] (6,2) circle (3pt);
		\draw[fill=black] (7,1) circle (3pt);
		\draw[fill=black] (8,2) circle (3pt);
		\draw[fill=black] (9,1) circle (3pt);
		\draw[fill=black] (10,0) circle (3pt);
		\node at (0.3,0) {$a$};
		\node at (1.3,1) {$b$};
		\node at (2.3,2) {$c$};
		\node at (3.3,3) {$d$};						
		\node at (4.3,2) {$c$};
		\node at (5.3,3) {$e$};						
		\node at (6.3,2) {$c$};
		\node at (7.3,1) {$b$};
		\node at (8.3,2) {$f$};
		\node at (9.3,1) {$b$};
		\node at (9.7,0) {$a$};
		\draw[thick] (0,0) -- (1,1) -- (2,2) -- (3,3) -- (4,2) -- (5,3) -- (6,2) -- (7,1) -- (8,2) -- (9,1) -- (10,0);
	\end{tikzpicture}
	\caption{Tree and Dyck path representation of $\Gamma= \Gamma\big(\text{``((()())())"}\big)\in\mathcal T_5$.}\label{fig:labelledTreeAndDyckPathRepresentation}
\end{figure}
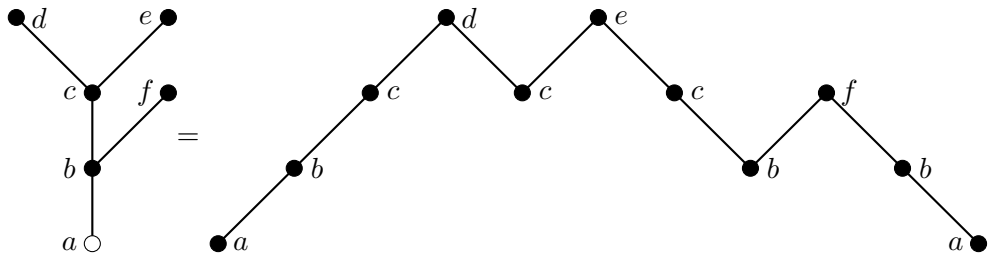

\subsection{Graphical Representation of the Dyck Path Expansion}  

We now introduce a graphical representation to rewrite $\mu_{x,k}$. The same expansion in a slightly different presentation was also used in \cite{Otto}.


\begin{lem}[Graphical representation of $m_x\ofN(z)$]\label{lem:graphicalRepresentationOfm_j}
For every $\Gamma=(V(\Gamma),E(\Gamma))\in\mathcal T_k$, and for every $x\in\{1,\dots,N\}$ set \be\label{def:valNj} val\ofN_x(\Gamma)\deq \Big(\prod_{v\in V(\Gamma)}\sum_{x_v=1}^N\Big)\Big[\delta_{x_\text{root}=x} \prod_{e\in E(\Gamma)}S_{x_{e_-}x_{e_+}}\Big].\ee 
Then for the Laurent series \eqref{mx} we have
	\be
	  m_x\ofN(z)=-\frac{1}{z}\sum_{k=0}^\infty \Big[\sum_{\Gamma\in\mathcal T_k}val\ofN_x(\Gamma)\Big]z^{-2k}, \qquad
	  |z|>\max\supp\rho_x.
	\ee
\end{lem}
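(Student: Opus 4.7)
The plan is to invert the Dyson equation \eqref{Qve} into a fixed-point identity well-suited to extracting Laurent coefficients. Set $f_x(z) \deq -z\, m_x(z)$. Because $\{m_x\}$ is uniquely determined in $\mathbb{H}$ by $\im m_x>0$, and because $-\overline{m_x(-\bar z)}$ solves the same Dyson equation as $m_x(z)$ (since $S$ is real symmetric), one obtains $\rho_x(-\tau)=\rho_x(\tau)$, so the Laurent expansion \eqref{mx} only contains even powers of $1/z$ and $f_x(z)=\sum_{k\ge 0}\mu_{x,2k}z^{-2k}=1+O(z^{-2})$. The Dyson equation is then equivalent to
\begin{equation}\label{eq:fixedpt}
    f_x\Big(1-\tfrac{1}{z^{2}}\sum_{y}S_{xy}f_{y}\Big)=1,
\end{equation}
and for $|z|$ large a geometric series expansion gives
\begin{equation*}
    f_x(z)=\sum_{n\ge 0}z^{-2n}\sum_{y_1,\dots,y_n}S_{xy_1}\cdots S_{xy_n}\,f_{y_1}(z)\cdots f_{y_n}(z).
\end{equation*}

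The main point is then the combinatorial one: this recursion reproduces the canonical recursive decomposition of a planar rooted tree $\Gamma\in\mathcal T_k$ into a root of degree $n$ together with ordered subtrees $\Gamma_1,\dots,\Gamma_n$ of sizes $k_i$, with $k=n+\sum_i k_i$. Expanding $(\sum_y S_{xy}f_y)^n$ as an \emph{ordered} sum over $(y_1,\dots,y_n)$ matches precisely the planar ordering of the root's children, so every $\Gamma$ arises exactly once. Iterating the recursion and collecting one factor $S_{x_{e_-}x_{e_+}}$ for each edge $e$ produced at every peeling step yields, for the coefficient of $z^{-2k}$ in $f_x$, exactly
$\sum_{\Gamma\in\mathcal T_k}val^{(N)}_x(\Gamma)$, which is the claim after multiplication by $-1/z$.

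To make this rigorous I would proceed by strong induction on $k$. Define the formal Laurent series $\hat f_x(z)\deq\sum_{k\ge 0}z^{-2k}\sum_{\Gamma\in\mathcal T_k}val^{(N)}_x(\Gamma)$. The planar-tree decomposition described above gives directly that $\hat f_x$ is a formal solution of \eqref{eq:fixedpt}. On the other hand, comparing powers of $z^{-2}$ in \eqref{eq:fixedpt} determines the coefficients $\{\mu_{x,2k}\}_{k\ge 0}$ recursively and uniquely from $\mu_{x,0}=1$, so the formal solution of \eqref{eq:fixedpt} with constant term $1$ is unique. Thus $\hat f_x=f_x$ as Laurent series, and the identity
\begin{equation*}
    m_x(z)=-\frac{1}{z}\sum_{k=0}^{\infty}z^{-2k}\sum_{\Gamma\in\mathcal T_k}val^{(N)}_x(\Gamma)
\end{equation*}
holds for every $|z|>\max\supp\rho_x$, the region in which $m_x$ is analytic and the Laurent series \eqref{mx} converges.

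The only genuinely delicate point is the planar bookkeeping in the recursion step: one must verify that the expansion of $(\sum_y S_{xy}f_y)^n$ over ordered tuples $(y_1,\dots,y_n)$ corresponds bijectively to the choice of a planar ordering of the root's children, so that no Catalan-type overcount is incurred. Everything else is formal manipulation of Laurent series and is automatic on the annulus $|z|>\max\supp\rho_x$ where the analytic $m_x$ and the formal expansion agree.
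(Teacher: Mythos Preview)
Your argument is correct. The difference from the paper's proof is in the combinatorial decomposition of the trees. The paper rewrites the Dyson equation as $u_x = 1 + z^{-2}\sum_y S_{xy}\,u_x u_y$ and compares coefficients to obtain the \emph{binary} recursion $c_{x,k}=\sum_y S_{xy}\sum_{n=0}^{k-1}c_{x,k-n-1}c_{y,n}$; this is matched on the tree side by the ``last return to the root'' decomposition $\Gamma=\Gamma_1\oplus\Gamma_2$, where $\Gamma_2$ is the subtree hanging from the rightmost child of the root and $\Gamma_1$ is the rest. You instead invert the same fixed-point equation as a geometric series and peel off \emph{all} children of the root at once, matching the $n$-ary decomposition of a planar tree into the ordered tuple of subtrees at the root's children. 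Both bijections are standard and either suffices here; your version avoids introducing the $\oplus$ operation and the auxiliary Dyck-path argument for uniqueness of the split, at the cost of one extra geometric-series step. Your remark about the planar ordering preventing any Catalan overcount is exactly the point that needs checking, and it is correct.
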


\begin{proof}
Introduce $u_x(z)\deq -zm_x(z)$ and note that
the QVE \eqref{Qve} is equivalent to
\be\label{eq:combinatorialuxRecursion}
	u_x = 1+z^{-2}\sum_{y=1}^N S_{xy}u_xu_y.
\ee
Using \eqref{mx}, $u_x(z)$ admits a Laurent series expansion for $|z|$ large enough. By the symmetry of the measure $\rho_x$, the odd coefficients vanish and with $c_{x,k}\deq\mu_{x,2k}$ we have 
\be\label{eq:uxAnsatz}
	u_x(z) = \sum_{k=0}^\infty c_{x,k}z^{-2k},
\ee
for large $|z|$.
Now plugging \eqref{eq:uxAnsatz} into \eqref{eq:combinatorialuxRecursion} and comparing coefficients
we get the following recursion: 
\be\label{eq:inductionRecursion}
c_{x,k} = \sum_{y=1}^N S_{xy}\sum_{n=0}^{k-1}c_{x,k-n-1}c_{y,n}.
\ee
To show that $c_{x,k}=\sum_{\Gamma\in\mathcal T_k}val_x(\Gamma)$ we proceed by induction on $k$. The base case is clear. Assume that for all $n$ with $0\leq n<k$ we already know that
\be\label{eq:inductionHypothesis}
c_{x,n} = \sum_{\Gamma\in\mathcal T_n}val_x(\Gamma)\quad\quad\text{for } x=1,\dots,N.
\ee
We identify every $\Gamma\in\mathcal T_k$ with its Dyck path $\pi(\Gamma)\in D_{2k}$. 
We define for every $\Gamma\in\mathcal T_k$ the numbers $n_1=n_1(\Gamma), n_2=n_2(\Gamma)$ by 
$$2n_1\deq\max\{t\in [0,2k):\pi(\Gamma)(t)=0\},$$
and $n_2\deq k-n_1-1$.
Now every $\Gamma\in\mathcal T_k$ can \emph{uniquely} be written as $\Gamma = \Gamma_1\oplus\Gamma_2$ with $\Gamma_1\in \mathcal T_{n_1},\Gamma_2\in\mathcal T_{n_2}$, where we define $\Gamma_1\oplus\Gamma_2$ via its Dyck path representation as follows:
$$\pi(\Gamma_1\oplus\Gamma_2)(t)\deq \begin{cases}
		\pi(\Gamma_1)(t), & \text{if } t=0,1,\dots,2|E(\Gamma_1)| \\
		\pi(\Gamma_2)(t)+1, & \text{if } t=2|E(\Gamma_1)|+1,\dots,2|E(\Gamma_1)|+2|E(\Gamma_2)|+1\\
		0, & \text{if } t=2|E(\Gamma_1)|+2|E(\Gamma_2)|+2.
	\end{cases}$$
See Figure \ref{fig:gammaOplusGammaIllustration} for an illustration.
By definition \eqref{def:valNj}, we have 
\be\label{eq:recursionOfVal}
\sum_{y=1}^N S_{xy}val_x(\Gamma_1)val_y(\Gamma_2) = val_x(\Gamma_1\oplus\Gamma_2).
\ee
Plugging in \eqref{eq:inductionHypothesis} into \eqref{eq:inductionRecursion} and using \eqref{eq:recursionOfVal} we see that
\be\label{eq:sumOverConcatTrees}
	c_{x,k}=\sum_{y=1}^N S_{xy}\sum_{n=0}^{k-1} val_x(\Gamma_1)val_y(\Gamma_2)
	\eqby{\eqref{eq:recursionOfVal}} \sum_{n=0}^{k-1}\sum_{\Gamma_1\in\mathcal T_{n}}\sum_{\Gamma_2\in\mathcal T_{k-n-1}}val_x(\Gamma_1\oplus\Gamma_2).
\ee
Since for every $\Gamma\in\mathcal T_k$ there exists exactly one pair $n_1(\Gamma),n_2(\Gamma)$ such that $\Gamma=\Gamma_1\oplus\Gamma_2$ with uniquely determined $\Gamma_i\in\mathcal T_{n_i}$, we have that \eqref{eq:sumOverConcatTrees} is just $\sum_{\Gamma\in\mathcal T_k}val_x(\Gamma)$.
\end{proof}

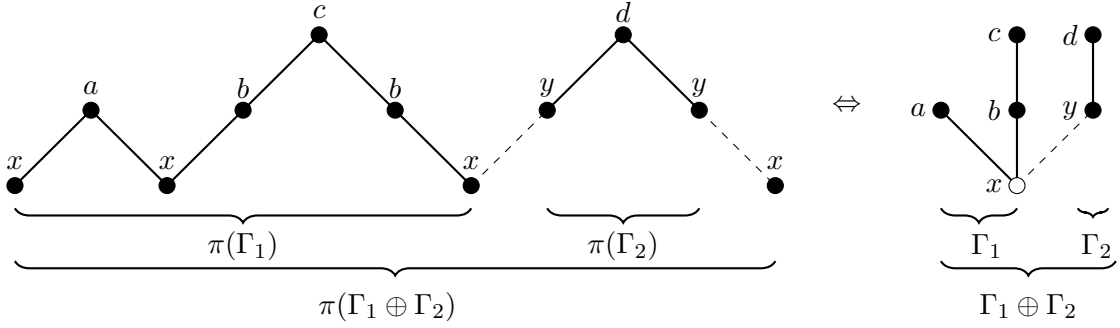
\begin{figure}[h]
\begin{tikzpicture}[baseline=6ex]
	\draw[fill=black] (0,0) circle (3pt);
	\draw[fill=black] (1,1) circle (3pt);
	\draw[fill=black] (2,0) circle (3pt);
	\draw[fill=black] (3,1) circle (3pt);
	\draw[fill=black] (4,2) circle (3pt);
	\draw[fill=black] (5,1) circle (3pt);
	\draw[fill=black] (6,0) circle (3pt);

	\draw[fill=black] (7,1) circle (3pt);
	\draw[fill=black] (8,2) circle (3pt);
	\draw[fill=black] (9,1) circle (3pt);

	\draw[fill=black] (10,0) circle (3pt);

	\draw[thick] (0,0) -- (1,1) -- (2,0) -- (3,1) -- (4,2) -- (5,1) -- (6,0);
	\draw[dashed] (6,0) -- (7,1);
	\draw[thick] (7,1) -- (8,2) -- (9,1);
	\draw[dashed] (9,1) -- (10,0);
	
	\draw [decoration={brace,amplitude=0.5em},decorate, thick]
        (6,-.3) -- (0,-.3);
    \node at (3,-.8) {$\pi(\Gamma_1)$};

    \draw [decoration={brace,amplitude=0.5em},decorate, thick]
        (9,-.3) -- (7,-.3);
    \draw [decoration={brace,amplitude=0.5em},decorate, thick]
        (10,-1) -- (0,-1);
    \node at (4.9,-1.6) {$\pi(\Gamma_1\oplus\Gamma_2)$};

    \node at (8,-.8) {$\pi(\Gamma_2)$};
    
    \node at (0,.3)  {$x$};
    \node at (1,1.3) {$a$};
    \node at (2,.3)  {$x$};
    \node at (3,1.3) {$b$};
    \node at (4,2.3) {$c$};
    \node at (5,1.3) {$b$};
    \node at (6,.3)  {$x$};
    \node at (7,1.3) {$y$};
    \node at (8,2.3) {$d$};
    \node at (9,1.3) {$y$};
    \node at (10,.3) {$x$};
\end{tikzpicture}
$\quad\Leftrightarrow\quad$
\begin{tikzpicture}[baseline=6ex]
	\node at (-0.3,0) {$x$};
	\node at (-1.3,1) {$a$};
	\node at (-0.3,1) {$b$};
	\node at (-0.3,2) {$c$};
	\node at (0.7,1) {$y$};
	\node at (0.7,2) {$d$};
	\draw[thick] (1,2) -- (1,1);
	\draw[thick] (0,0) -- (0,2);
	\draw[dashed] (0,0) -- (1,1);
	\draw[thick] (0,0) -- (-1,1);
	\draw[fill=white] (0,0) circle (3pt);
	\draw[fill=black] (0,1) circle (3pt);
	\draw[fill=black] (0,2) circle (3pt);
	\draw[fill=black] (-1,1) circle (3pt);
	\draw[fill=black] (1,1) circle (3pt);
	\draw[fill=black] (1,2) circle (3pt);
	
	\draw [decoration={brace,amplitude=0.5em},decorate, thick]
        (0,-.3) -- (-1,-.3);
	\draw [decoration={brace,amplitude=0.3em},decorate, thick]
        (1.2,-.3) -- (0.8,-.3);
	\node at (-.4,-.8) {$\Gamma_1$};
	\node at (1.05,-.8) {$\Gamma_2$};
    \draw [decoration={brace,amplitude=0.5em},decorate, thick]
        (1.3,-1) -- (-1,-1);
	\node at (.15,-1.6) {$\Gamma_1\oplus \Gamma_2$};
\end{tikzpicture}
\caption{Illustration of $\pi(\Gamma_1\oplus\Gamma_2)$ and $\Gamma_1\oplus\Gamma_2$.} \label{fig:gammaOplusGammaIllustration}
\end{figure}

Combining \eqref{maxibound} with Lemma \ref{lem:graphicalRepresentationOfm_j} we get
\be\label{eq:suppRhoNbddByLimsupValN}
	\max\supp\rho\ofN\leq \limsup_k\Bigg|\sum_{\Gamma\in\mathcal T_k}val\ofN(\Gamma)\Bigg|^\frac{1}{2k},
\ee
for \be\label{def:valOfTree}val\ofN \deq \max_{x=1,\dots,N}val\ofN_x.\ee



\begin{rem}[Obtaining the already known bound $2\|S\|^\frac{1}{2}$]\label{rem:trivialBound}
Ignoring the internal structure of the trees $\Gamma\in\mathcal T_k$ and successively summing up the labels starting from the leaves of the tree, using $\sum_y S_{xy}\le \| S\|$, we can bound $val\ofN(\Gamma)$ by $\|S\|^k$.
Noting that $|\mathcal T_k|=|D_{2k}|=O(2^{2k})$ as $k\to\infty$ we easily get the claimed bound from \eqref{eq:suppRhoNbddByLimsupValN}.
\end{rem}

\subsection{Intuition for improvement} \label{rem:intuitionHowToImproveTrivialBoundOnValPi}

	
We explain in a simple example how to improve the previous trivial bound. For example, for $\Gamma$ as in Figure \ref{fig:labelledTreeAndDyckPathRepresentation} we have $$val\ofN(\Gamma)=\max_x \sum_{yzuvw} S_{xy}S_{yz}S_{zu} S_{zv}S_{yw}.$$ Instead of simply bounding it by $\|S\|^5$ we could bound it by
\be\label{eq:exampleBndUp}
val\ofN(\Gamma)
\leq \max_x \sum_{yzu} S_{xy}S_{yz}S_{zu}\max_{z'} \sum_v S_{z'v} \max_{y'} \sum_w S_{y'w}
\leq \|S^3\| \|S\|^2
\ee
or by 
\be\label{eq:exampleBndDown}
val\ofN(\Gamma)
\leq\max_x \sum_{yw} S_{xy}S_{yw} \max_{y'} \sum_{zv} S_{y'z}S_{zv} \max_{z'} \sum_u S_{z'u}
\leq \|S^2\|^2 \|S\|,
\ee
both of which are less or equal than $\|S\|^5$ since the norm is submultiplicative.
	
It is easy to see that such a process always gives a bound of the form $val\ofN(\Gamma)\leq\prod_{i\leq k}\|S^i\|^{p_i}$ with some sequence of natural numbers $(p_i)_i$ such that $\sum_{i}ip_i=k$, depending on $\Gamma$. For certain $\Gamma$'s the improvement over the trivial bound is meagre or even non-existent. While we have a certain freedom in ``chopping up" the multiple summation for $val\ofN(\Gamma)$, in general we cannot obtain all bounds of the form $val\ofN(\Gamma)\leq\prod_{i\leq k}\|S^i\|^{p_i}$. 
Our current choice of $\Gamma$, for example, does not admit the bounds $\|S^4\|\|S\|$ or $\|S^5\|$; these would require a path of length $4$ or $5$, respectively, from the root. 
The worst case for $k=5$, the tree corresponding to $\pi = ()()()()()$ (every node is connected to the root), does not admit any bound other than $\|S\|^5$. 
	
In the next sections we first formalise the above process of chopping up $val\ofN$. Then we quantify how trees between the two extreme cases (the completely linear tree with $d(v)\leq 2$ for every vertex $v$ and the tree where $d(v_{root})=k$) typically look like for large $k$ and which chopping up gives the "best" (smallest) weight.

It is also worth noting that for general $S$ we cannot say that one bound is always better than the other one. Depending on $S$ either $\|S^3\|\|S\|^2$ or $\|S^2\|^2\|S\|$ may be preferable.  In what follows we simply choose a fixed method (independent of $S$) to obtain our bound.

\subsection{The Chopping Up Process}

\subsubsection{Introducing the chopping-up operation}
Recall that $\mathcal F_k$ denotes the set of forests (collection of rooted trees drawn in the plane according to the convention of Section \ref{subsec:dyckpaths}) with $k$ edges in total.

Chopping up is an operation $\mathcal F_k\to\mathcal F_k$ for every $k$ where some vertices of $\Gamma\in\mathcal F_k$ are {\it split} but the edge set remains unchanged.
{\it Splitting of a vertex $v$} is an operation that creates a few new copies of $v$ and  disconnects some (or all) edges emanating from $v$ in the upward direction in such a way that these edges will emanate from a new copy of $v$. The new vertices (called {\it copies} of $v$ in the splitting), together with the original $v$ that is kept, are drawn next to each other in an oriented fashion to keep the planarity of the graph, see Figure \ref{fig:choppingUp} for possible splittings of the tree in Figure \ref{fig:labelledTreeAndDyckPathRepresentation}.
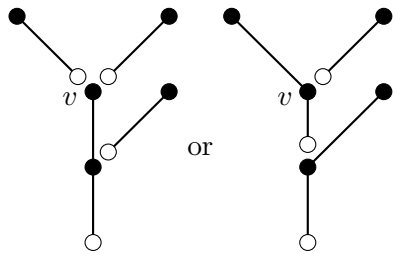
\begin{figure}[h]
	\begin{tikzpicture}[baseline=7ex]
		\draw[thick] (0,0) -- (0,1) -- (0,2);
		\draw[thick] (-0.2,2.2) -- (-1,3);
		\draw[thick] (0.2,2.2) -- (1,3);
		\draw[thick] (0.2,1.2) -- (1,2);		
		\draw[fill=white] (0,0) circle (3pt);
		\draw[fill=black] (0,1) circle (3pt);
		\draw[fill=white] (0.2,1.2) circle (3pt);
		\draw[fill=black] (0,2) circle (3pt);
		\draw[fill=white] (-0.2,2.2) circle (3pt);
		\draw[fill=white] (0.2,2.2) circle (3pt);
		\draw[fill=black] (-1,3) circle (3pt);
		\draw[fill=black] (1,3) circle (3pt);
		\draw[fill=black] (1,2) circle (3pt);
		
		\node at (-.3,1.9){$v$};
	\end{tikzpicture}
	or
	\begin{tikzpicture}[baseline=7ex]
		\draw[thick] (0,0) -- (0,1) -- (1,2);
		\draw[thick] (0,1.2) -- (0,2) -- (-1,3);
		\draw[thick] (0.2,2.2) -- (1,3);
		\draw[fill=white] (0,0) circle (3pt);
		\draw[fill=black] (0,1) circle (3pt);		
		\draw[fill=white] (0,1.3) circle (3pt);
		\draw[fill=black] (0,2) circle (3pt);
		\draw[fill=white] (0.2,2.2) circle (3pt);
		\draw[fill=black] (-1,3) circle (3pt);
		\draw[fill=black] (1,3) circle (3pt);
		\draw[fill=black] (1,2) circle (3pt);

		\node at (-.3,1.9){$v$};
	\end{tikzpicture}
	\caption{Two possible splittings of the tree in Figure \ref{fig:labelledTreeAndDyckPathRepresentation}. The vertex $v$ is completely split in the left graph, and it is split in the leftmost way (almost completely) in the right graph.}\label{fig:choppingUp}
\end{figure}
In particular all copies are drawn at the same horizontal level as $v$.
There is at most a single edge emanating from $v$ downwards (connecting $v$ to its father); this edge will never be separated from the original $v$.
Some children of $v$, however, may disconnect from $v$ and connect instead to a copy of $v$.
These copies become the roots of a new component.
Thus the vertex $v$ remains filled or unfilled, but all new copies of $v$ will be unfilled, and they are the lowest point of their connected component (in the new graph). Splitting is applied only to vertices with degree at least two (we do not split leaves or roots with only one child).

If $\Gamma'$ is obtained by chopping up $\Gamma$, then we indicate this fact by $\Gamma\prec \Gamma'$. 
We clearly have transitivity, i.e. if $\Gamma,\Gamma',\Gamma''\in\mathcal F_k$ with $\Gamma\prec \Gamma'$ and $\Gamma'\prec \Gamma''$, then
\be\label{lem:transitivity}\Gamma\prec \Gamma''.\ee
Now we define a particularly useful subset of possible splittings:

\begin{defi}
	Fix any vertex $v$ of $\Gamma \in \mathcal F_k$ with number of children $c(v)\geq 1$. 
A splitting at the vertex $v$ is called \emph{complete} if it yields $c(v)$ copies.

A splitting at $v$ which yields $c(v)-1$ copies is called \emph{almost complete}. In this latter case, if the edge connecting $v$ to its remaining child was the leftmost\footnote{Since the forest $\Gamma$ is drawn in the plane in a specific way, right and left are meaningful concepts.} one (out of all the edges connecting $v$ to its children), we call the splitting \emph{leftmost} and if it was the rightmost one, we call the splitting \emph{rightmost}.
\end{defi}

We call a forest $\Gamma$ {\it linear}, if graph-theoretically it is a union of paths, i.e. the degree of every vertex is at most two. 
%
	Notice that we obtain a linear $\Gamma'$ in chopping up $\Gamma$, if we split every vertex $v$ of $\Gamma$ with $c(v)\geq 1$ either \emph{completely} or \emph{almost completely}.
	

\subsubsection{Monotonicity of $val$ along chopping}
We now extend the previously introduced concept of a {\it value} from trees (\eqref{def:valNj}, \eqref{def:valOfTree}) to any $\Gamma\in\mathcal F_k$.


\begin{defi}
Every vertex $v$ gets a label $x_v\in\{1,\dots,N\}$ 
and labels assigned to non-root vertices are summed up, while we take the maximum over labels assigned to roots:
\be\label{def:val}
   val\ofN(\Gamma) \deq \Big(\prod_{v\in R(\Gamma)} \max_{x_v}\Big) 
   \Big(\prod_{v\in V(\Gamma)\setminus R(\Gamma)}  \sum_{x_v=1}^N \Big)\Big[ \prod_{e\in E(\Gamma)} S_{x_{e_-} x_{e_+}} \Big].
\ee
where $R(\Gamma)\subseteq V(\Gamma)$ is the set of roots.
\end{defi}

Note that this coincides with \eqref{def:valOfTree} fir trees, i.e. when there is only one root.
See Figure \ref{fig:choppedUpTreesAndTheirValues} for some chopped up trees and their values.

\begin{figure}[h]
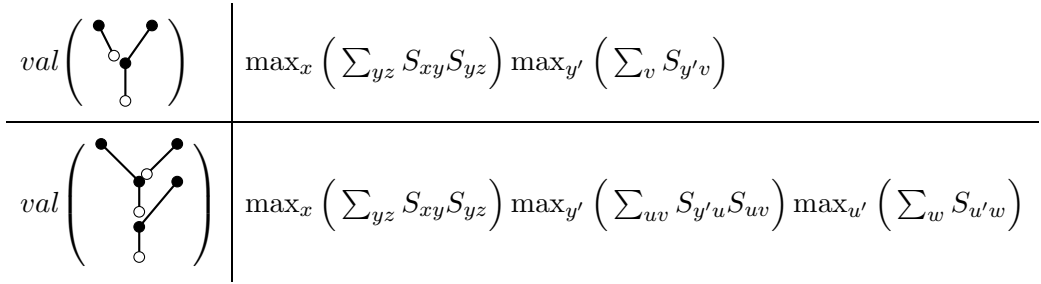
\begin{tabular}{l|l}
%
%
			
	\trimbox{0cm -0.2cm 0cm -0.2cm}{$val\ofN\Bigg($ \tikz[baseline=2.5ex]{
			\draw[thick] (0,0) -- (0,0.5);
			\draw[thick] (-0.15,0.6) -- (-0.35,1);
			\draw[thick] (0,0.5) -- (0.35,1);
			\draw[fill=white] (0,0) circle (2pt);
			\draw[fill=black] (0,0.5) circle (2pt);
			\draw[fill=white] (-0.15,0.6) circle (2pt);
			\draw[fill=black] (0.35,1) circle (2pt);
			\draw[fill=black] (-0.35,1) circle (2pt);
			} $\Bigg)$} & $\max_{x}\Big(\sum_{yz} S_{xy}S_{yz}\Big)\max_{y'}\Big(\sum_{v}S_{y'v}\Big)$\\\hline
			
	\trimbox{0cm -0.2cm 0cm -0.2cm}{$val\ofN\Biggg($ \tikz[baseline=3.6ex]{
			\draw[thick] (0,0) -- (0,0.4) -- (0.5,1);
			\draw[thick] (0,0.55) -- (0,1) -- (-0.5,1.5);
			\draw[thick] (0.1,1.1) -- (0.5,1.5);
			\draw[fill=black] (0,0.4) circle (2pt);
			\draw[fill=black] (0.5,1) circle (2pt);
			\draw[fill=black] (0,1) circle (2pt);
			\draw[fill=black] (-0.5,1.5) circle (2pt);
			\draw[fill=black] (0.5,1.5) circle (2pt);
			\draw[fill=white] (0,0) circle (2pt);
			\draw[fill=white] (0,0.6) circle (2pt);
			\draw[fill=white] (0.1,1.1) circle (2pt);
			} $\Biggg)$} & $\max_{x}\Big(\sum_{yz} S_{xy}S_{yz}\Big)\max_{y'}\Big(\sum_{uv} S_{y'u}S_{uv}\Big)\max_{u'}\Big(\sum_{w}S_{u'w}\Big)$
\end{tabular}
\caption{Chopped-up trees and their values.}\label{fig:choppedUpTreesAndTheirValues}
\end{figure}


\begin{lem}\label{monotonic}
	The value function is monotonic along the chopping up operation:
	\begin{equation}\label{eq:monotonic}
	    val\ofN (\Gamma) \le val\ofN (\Gamma') \qquad \text{if} \qquad \Gamma\prec \Gamma'.
	\end{equation}
\end{lem}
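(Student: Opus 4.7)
By the transitivity \eqref{lem:transitivity} of the chopping-up relation and the fact that any chopping $\Gamma \prec \Gamma'$ can be realized as a finite sequence of elementary single-vertex splittings, it is enough to verify \eqref{eq:monotonic} when $\Gamma'$ is obtained from $\Gamma$ by a single splitting at one vertex $v$. So fix such a $v$ and let $i_1,\ldots,i_m$ denote the indices of those children of $v$ whose connecting edges are detached from $v$ in the splitting and reattached to new copies $v^{(1)},\ldots,v^{(m)}$ of $v$ (which become the roots of their new components), while the remaining children $j_1,\ldots,j_{c(v)-m}$ stay attached to $v$.

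The plan is to factorize $val\ofN(\Gamma)$ according to the components that the splitting of $v$ creates. Removing $v$ from $\Gamma$ disconnects the forest into: the piece containing the father of $v$ (absent if $v$ is a root), and one piece per child of $v$, each hanging from the corresponding child-edge. For each child $c_i$, partially evaluating the sum in \eqref{def:val} over every label in the subtree rooted at $c_i$ (leaving the label $x_v$ of $v$ unevaluated) yields some nonnegative function $G_i(x_v)$. Writing $F(x_v)$ for the analogous contribution of the father's side (which is absent, i.e.\ $F\equiv 1$, when $v$ is a root), we obtain
\[
val\ofN(\Gamma) = A_v\!\left[\, F(x_v)\prod_{l=1}^{c(v)-m} G_{j_l}(x_v)\prod_{r=1}^{m} G_{i_r}(x_v)\,\right],
\]
where $A_v$ stands for $\sum_{x_v}$ if $v$ is a non-root and for $\max_{x_v}$ if $v$ is a root. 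The same partial evaluation applied to $\Gamma'$ gives, using that the copies $v^{(r)}$ are roots and hence labelled by a maximum,
\[
val\ofN(\Gamma') = A_v\!\left[\, F(x_v)\prod_{l=1}^{c(v)-m} G_{j_l}(x_v)\,\right]\cdot\prod_{r=1}^{m}\max_{x_{v^{(r)}}} G_{i_r}(x_{v^{(r)}}),
\]
since the components containing $v^{(r)}$ are now independent of the component containing $v$.

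The inequality then follows from the two elementary facts that (i) for every $r$, $G_{i_r}(x_v)\leq \max_{y} G_{i_r}(y)$, and (ii) all the functions $F,G_{i_r},G_{j_l}$ are nonnegative because the entries $S_{xy}$ are nonnegative. Fact (ii) lets us insert the upper bounds from (i) underneath the outer operator $A_v$ without reversing the inequality; the pulled-out maxima can then be factored outside $A_v$ since they no longer depend on $x_v$. Comparing the resulting expression with $val\ofN(\Gamma')$ yields \eqref{eq:monotonic}. The only place where one must be mildly careful is the case distinction $A_v=\sum$ versus $A_v=\max$ and, when $v$ is a root, the absence of the factor $F(x_v)$; both are handled identically because both $\sum$ and $\max$ are monotone under pointwise domination of nonnegative integrands. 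The nonnegativity of $S$ is the essential ingredient, and is the only structural property of the variance matrix used in the argument.
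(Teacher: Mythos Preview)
Your proof is correct and follows exactly the paper's approach: reduce to a single vertex splitting and bound each detached subtree's contribution $G_{i_r}(x_v)$ by its maximum, using nonnegativity of $S$; the paper merely illustrates this with the two--edge example \eqref{eq:trivialBoundForSplittingUpS} and says ``for more complicated graphs the proof is similar.'' One small imprecision worth noting: when $v$ is not a root, your displayed equality $val(\Gamma)=A_v[F(x_v)\cdots]$ tacitly absorbs the outer $\max_{x_r}$ over the root label of $v$'s tree into $F$, which is not literally correct since $\max_{x_r}$ and $\sum_{x_v}$ do not commute; the trivial fix is to keep $\max_{x_r}$ outside (writing $F=F(x_r,x_v)$) and apply your pointwise bound before maximizing, which leaves the argument intact.
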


\begin{proof}
The proof of this statement is an easy induction on subsequent splitting of vertices. It is based upon the trivial estimate \be\label{eq:trivialBoundForSplittingUpS}
		\max_a\sum_{b,c}S_{ab}S_{bc}\leq \max_a\sum_b S_{ab}\max_{b'}\sum_c S_{b'c} = \|S\|^2,
	\ee
which, in our graphical language, can also be written as:

\begin{figure*}[h]
	$val\ofN\Bigg($ \tikz[baseline=2.5ex]{
		\draw[thick] (0,0) -- (0,1);
		\draw[fill=white] (0,0) circle (2pt);
		\draw[fill=black] (0,0.5) circle (2pt);
		\draw[fill=black] (0,1) circle (2pt);} $\Bigg) \leq val\ofN\Bigg($ \tikz[baseline=2.5ex]{
		\draw[thick] (0,0) -- (0,0.35);
		\draw[thick] (0,0.65) -- (0,1);
		\draw[fill=white] (0,0) circle (2pt);
		\draw[fill=black] (0,0.35) circle (2pt);
		\draw[fill=white] (0,0.65) circle (2pt);
		\draw[fill=black] (0,1) circle (2pt);} $\Bigg)$

\end{figure*}
\noindent Here the original vertex with label $b$ was split, the copy received a new label $b'$. For more complicated graphs the proof is similar.
\end{proof}


\subsubsection{Bounding $val\ofN(\Gamma)$ using the chopping process}
Now we will fix $N,k\in\N$, $\Gamma\in\mathcal T_k$ and its corresponding Dyck path $\pi=\pi(\Gamma)$. 
We will chop up $\Gamma$, i.e. construct a linear chopped-up tree $\Gamma'\in\mathcal F_k$ with $\Gamma\prec\Gamma'$ with the minimal amount of chopping.

%

As an example, Figure \ref{fig:splittingUpGammaInLeftandRightmostWay} shows the leftmost and rightmost almost complete splittings. They give rise to the bounds \eqref{eq:exampleBndUp} and \eqref{eq:exampleBndDown}, respectively.

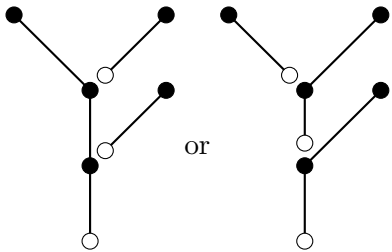
\begin{figure}[h]
	\begin{tikzpicture}[baseline=7ex]
		\draw[thick] (0,0) -- (0,1) -- (0,2) -- (-1,3);
		\draw[thick] (0.2,2.2) -- (1,3);
		\draw[thick] (0.2,1.2) -- (1,2);		
		\draw[fill=white] (0,0) circle (3pt);
		\draw[fill=black] (0,1) circle (3pt);
		\draw[fill=white] (0.2,1.2) circle (3pt);
		\draw[fill=black] (0,2) circle (3pt);
		\draw[fill=white] (0.2,2.2) circle (3pt);
		\draw[fill=black] (-1,3) circle (3pt);
		\draw[fill=black] (1,3) circle (3pt);
		\draw[fill=black] (1,2) circle (3pt);
	\end{tikzpicture}
	or
	\begin{tikzpicture}[baseline=7ex]
		\draw[thick] (0,0) -- (0,1) -- (1,2);
		\draw[thick] (0,1.2) -- (0,2) -- (1,3);
		\draw[thick] (-0.2,2.2) -- (-1,3);
		\draw[fill=white] (0,0) circle (3pt);
		\draw[fill=black] (0,1) circle (3pt);		
		\draw[fill=white] (0,1.3) circle (3pt);
		\draw[fill=black] (0,2) circle (3pt);
		\draw[fill=white] (-0.2,2.2) circle (3pt);
		\draw[fill=black] (-1,3) circle (3pt);
		\draw[fill=black] (1,3) circle (3pt);
		\draw[fill=black] (1,2) circle (3pt);
	\end{tikzpicture}
	\caption{Splitting up $\Gamma$ from Figure \ref{fig:labelledTreeAndDyckPathRepresentation} in the leftmost and the rightmost way.}\label{fig:splittingUpGammaInLeftandRightmostWay}
\end{figure}


\begin{rem}[Structure of bounds from the Dyck path representation of $\Gamma$]\label{rem:gainOnUpOrDownRuns}
Notice that with the leftmost choice, we made a gain on the  monotonically increasing parts (consecutive up-runs) of the corresponding Dyck path $\pi$, 
while with the rightmost choice we gained on the monotonically decreasing parts (consecutive down-runs). Here ``gain" means that we did not chop up the corresponding monotonic segments into pieces of length one; this allowed us to use the norms of higher powers of $S$ instead of trivially estimating them by higher powers of $\|S\|$.
\end{rem}

In what follows we want to quantify this gain. Recall the definition of $z_j$ from \eqref{eq:def_zj}.
For any sequence $T=(T_1, T_2,...,T_J)$  of nonnegative integers we set the notation
$$
   z^T\deq \prod_{j=1}^J z_j^{T_{j}}.
$$
The upper cutoff $J$ is a fixed parameter in Theorem \ref{thm:main}.
Note that $T_1$ does not influence $z^T$ since $z_1=1$. 

Define for any fixed $J\in\N$ and any path $\pi$ (i.e. any sequence $\pi = (\pi(i))_{i\in [a,b]}$ with $\pi(i)\in\N, |\pi(i+1)-\pi(i)|=1$) the $J$-tuples $U(\pi)$ and $D(\pi)$ by
\be\label{def:UpiDpi}
	U(\pi)_j\deq \#\{\text{up-runs of length }j\}
	\quad\text{and}\quad
	D(\pi)_j\deq \#\{\text{down-runs of length }j\}
\ee
for $j=1,2,\dots,J$. 
%
%
The observation from Remark \ref{rem:gainOnUpOrDownRuns} proves the following:

\begin{lem}\label{lem:boundByUpRunsOrByDownRuns}
Let $\Gamma\in\mathcal T_k$ and let $\pi=\pi(\Gamma)$ be the Dyck path corresponding to $\Gamma$.
With the rightmost choice we get 
\be\label{U}
   \frac{val(\Gamma)}{\|S\|^k} \le z^{U(\pi)}, 
\ee
while the leftmost choice gives
\be\label{D}
   \frac{val(\Gamma)}{\|S\|^k} \le z^{D(\pi)},
\ee
\hfill\qed
\end{lem}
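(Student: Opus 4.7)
The plan is, for each bound, to chop $\Gamma$ into a linear forest via simultaneous almost-complete splittings in one specified direction (rightmost or leftmost) at every vertex $v$ with $c(v)\geq 2$, then invoke the monotonicity of the value function (Lemma \ref{monotonic}) to reduce to computing the value of the chopped forest, which is then evaluated explicitly as a product of norms of powers of $S$. Under any almost-complete splitting every vertex retains at most one upward edge and every created copy becomes a new root with a single emanating edge, so every vertex of the resulting forest $\Gamma'\in\mathcal F_k$ has degree at most two; its connected components are therefore rooted paths.

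The central combinatorial step is to identify the lengths of these paths with the run lengths of the Dyck path $\pi(\Gamma)$. I would trace the outer-boundary walk of $\Gamma$ and argue that, under the rightmost splitting, the retained edges are precisely the ``rightmost-child'' edges and a maximal chain of such edges corresponds exactly to a maximal monotone run of $\pi$ of the matching type: the walk returns from a leaf to its parent and keeps moving along the chain so long as the vertex it just left was the rightmost child of its parent, terminating upon reaching a parent with an unvisited sibling or arriving back at the original root. A symmetric argument pairs leftmost-child chains with runs of the opposite monotonicity. In either case the appropriate splitting produces a length-preserving bijection between components of $\Gamma'$ and runs of $\pi$ of the matching type.

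Given the path decomposition, $val\ofN$ factorises over disjoint components of a forest, and by \eqref{def:val} a single rooted path of length $\ell$ contributes
$$\max_{x_0}\sum_{x_1,\dots,x_\ell}S_{x_0x_1}S_{x_1x_2}\cdots S_{x_{\ell-1}x_\ell}=\max_{x_0}\sum_{x_\ell}(S^\ell)_{x_0x_\ell}=\|S^\ell\|.$$
Multiplying over components yields $val\ofN(\Gamma')=\prod_{j\geq 1}\|S^j\|^{p_j}$ where $p_j$ counts components of length $j$. Using $\sum_j jp_j=k$, this rewrites as $\|S\|^k\prod_{j\geq 1}z_j^{p_j}$; truncating the product at $j=J$ using $z_j\leq 1$ (which follows from submultiplicativity of $\|\cdot\|$) delivers the inequalities \eqref{U} and \eqref{D}, together with Lemma \ref{monotonic} applied to $\Gamma\prec\Gamma'$.

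The main obstacle is the bijection in the second step: it requires careful planar-orientation bookkeeping to verify that maximal chains of retained edges correspond precisely to maximal monotone runs of $\pi$, with matching lengths and no spurious components. Everything else is either straightforward combinatorial counting, the already-proved monotonicity Lemma \ref{monotonic}, or the elementary identity $\max_{x_0}\sum_{x_\ell}(S^\ell)_{x_0x_\ell}=\|S^\ell\|$.
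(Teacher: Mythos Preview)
Your approach is exactly what the paper intends: its own ``proof'' is the single sentence ``The observation from Remark~\ref{rem:gainOnUpOrDownRuns} proves the following'', and your proposal simply fleshes that out via the almost-complete splitting in one direction, monotonicity (Lemma~\ref{monotonic}), the factorisation $val(\Gamma')=\prod_j\|S^j\|^{p_j}$, and the bijection between linear components and runs of $\pi$. One small caveat: your own description of the rightmost splitting---tracing the return walk from a leaf along rightmost-child edges---identifies those components with \emph{down}-runs, which agrees with the paper's worked example (equations~\eqref{eq:exampleBndUp}--\eqref{eq:exampleBndDown}, Remark~\ref{rem:gainOnUpOrDownRuns}) and its later use in~\eqref{eq:defTtilde}, but is the reverse of the labels attached to~\eqref{U} and~\eqref{D} in the lemma statement; this is a typo in the paper, not a gap in your argument.
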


In our concrete case (compare Figures \ref{fig:labelledTreeAndDyckPathRepresentation} and \ref{fig:splittingUpGammaInLeftandRightmostWay}) we have $U=(2,0,1,0, \dots)$ and $D=(1,2,0,\dots)$. 

\subsubsection{Introducing our choice of chopping up $\Gamma$}

We will need a mixture of the two estimates \eqref{U} and \eqref{D}, and it will be more convenient to work with the Dyck path $\pi=\pi(\Gamma)\in D_{2k}$ corresponding to $\Gamma\in\mathcal T_k$. Namely, above a certain threshold height $\ell$ we will 
to follow the rightmost choice, below that level the leftmost choice (see Figure \ref{fig:thresholdIllustration} for a naive sketch).

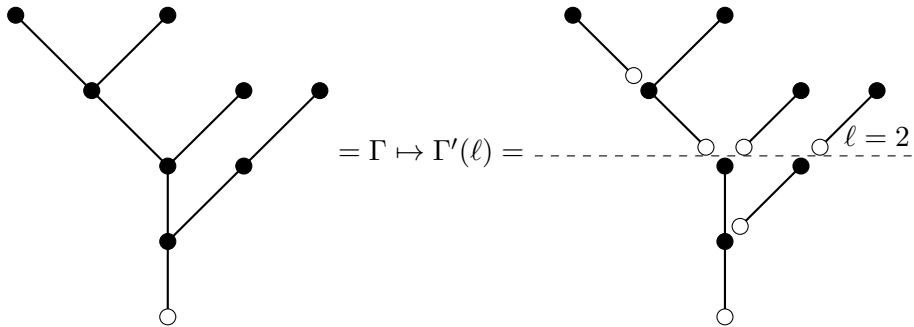
\begin{figure}[h]
	\begin{tikzpicture}[baseline=12.5ex]
		\draw[thick] (0,0) -- (0,1) -- (0,2) -- (-1,3) -- (-2,4);
		\draw[thick] (-1,3) -- (0,4);
		\draw[thick] (0,1) -- (1,2) -- (2,3);
		\draw[thick] (0,2) -- (1,3);
		\draw[fill=white] (0,0) circle (3pt);
		\draw[fill=black] (0,1) circle (3pt);
		\draw[fill=black] (0,2) circle (3pt);
		\draw[fill=black] (1,2) circle (3pt);
		\draw[fill=black] (2,3) circle (3pt);
		\draw[fill=black] (1,3) circle (3pt);
		\draw[fill=black] (-1,3) circle (3pt);
		\draw[fill=black] (-2,4) circle (3pt);
		\draw[fill=black] (0,4) circle (3pt);
	\end{tikzpicture}
	$=\Gamma\mapsto\Gamma'(\ell)=$
	\begin{tikzpicture}[baseline=12.5ex]
		\draw[thick] (0,0) -- (0,1) -- (0,2);
		\draw[thick] (0.2,1.2) -- (1,2);
		\draw[thick] (1.25,2.25) -- (2,3);
		\draw[thick] (0.25,2.25) -- (1,3);
		\draw[thick] (-0.25,2.25) -- (-1,3) -- (0,4);
		\draw[thick] (-1.2,3.2) -- (-2,4);
		\draw[dashed] (-2.5,2.135) -- (2.5,2.135);
		\draw[fill=white] (0,0) circle (3pt);
		\draw[fill=black] (0,1) circle (3pt);
		\draw[fill=white] (0.2,1.2) circle (3pt);
		\draw[fill=black] (0,2) circle (3pt);
		\draw[fill=white] (-0.25,2.25) circle (3pt);
		\draw[fill=white] (0.25, 2.25) circle (3pt);
		\draw[fill=black] (1,2) circle (3pt);
		\draw[fill=white] (1.25,2.25) circle (3pt);
		\draw[fill=black] (2,3) circle (3pt);
		\draw[fill=black] (1,3) circle (3pt);
		\draw[fill=black] (-1,3) circle (3pt);
		\draw[fill=white] (-1.2,3.2) circle (3pt);
		\draw[fill=black] (-2,4) circle (3pt);
		\draw[fill=black] (0,4) circle (3pt);
		\node at (2,2.4) {$\ell=2$};
	\end{tikzpicture}
	
	\caption{Graph splitting at level $\ell=2$ (Above $\ell$: rightmost splitting, below $\ell$: leftmost splitting, at $\ell$: complete splitting.)}\label{fig:thresholdIllustration}
\end{figure}

Moreover, this choice will be determined not by the actual height of the vertex, but by the height of $\pi$ at certain coarse-grained cutoff times in order to avoid that the rightmost and leftmost choices alternate too often.
These requirements necessitate a slightly more refined construction.

Choose a (small) parameter $\e$ and define the sequence of {\it cutoff times}
$$
   t_j \deq \lfloor 2k\e j\rfloor, \qquad j=0,1,\ldots, 1/\e
$$
(we assume that $1/\e$ is an integer). These cutoff times naturally split any path $\pi$ into $1/\e$ segments of equal\footnote{Equal up to $\pm 1$, which will not matter as $k\to\infty$. Henceforth we will assume that $t_j\eqbydef \lfloor 2k\e j\rfloor = 2k\e j$.} length, i.e.
$$
  \pi = \pi_{[0,2k]} = \bigcup_{j=0}^{1/\e-1} \pi_j, \qquad \pi_j := \pi_{[t_{j}, t_{j+1}]}.
$$
Even though $\pi_j$ is defined as the restriction of $\pi$, with a slight abuse of notation we will shift its argument starting at $0$, i.e. strictly speaking $\pi_j=\pi_{[t_j,t_{j+1}]}\circ s_{-t_j}$ where $s_{-t_j}(t) \deq t-t_j$ is the shift operator.
The height $\pi(t_j)$ of the beginning of each segment will be called the $j$-th {\it cutoff height}. 
For every integer $i\in [0,2k]$ there is a unique $j$ such that $i\in [t_j, t_{j+1})$ and the {\it cutoff height of $i$} is defined to be $\pi(t_j)$, i.e. the cutoff height of any index $i$ is determined by the initial point of its segment.

Given the parameters $\e>0$,  $\ell>1$, $\ell\in \N$, and given a $\Gamma\in\mathcal T_k$, and hence the corresponding Dyck path $\pi\in D_{2k}$, we now define a specific chopped-up graph $\Gamma'(\e, \ell)$ with $\Gamma\prec \Gamma'(\e, \ell)$.

First we define a subset $R$ of the (integer) time variables in $[0,2k]$ as follows: 
$$
   R\deq\bigcup_{j=0}^{1/\e-1} R_j,
$$
with
$$
	R_j \deq \begin{cases} \{ i \in [t_j, t_{j+1})\; : \; \pi(i)<\pi(i+1)\}   &  \text{if}  \quad  \pi (t_j)\le 2k\e (\ell -1)  \\
   \{ i \in [t_j, t_{j+1})\; : \; \pi(i)>\pi(i+1)\}  & \text{if}  \quad \pi (t_j)\ge 2k\e(\ell +1), \\
 \emptyset & \text{if}  \quad 2k\e(\ell-1) <\pi (t_j)< 2k\e(\ell +1).
 \end{cases}
$$
The set $R$ contains those times $i\in\N$ when the path goes upwards whenever its  cutoff height is below the lower threshold $2k\e (\ell -1)$ as well as those times when the path goes downwards if the cutoff height is above the upper threshold $2k\e (\ell +1)$.
For any $i\in R$, we \emph{tag} the edges between $(i,\pi(i))$ and $(i+1, \pi(i+1))$ of the Dyck path and draw them bold (see Figure \ref{fig:exampleGraphicForR}).

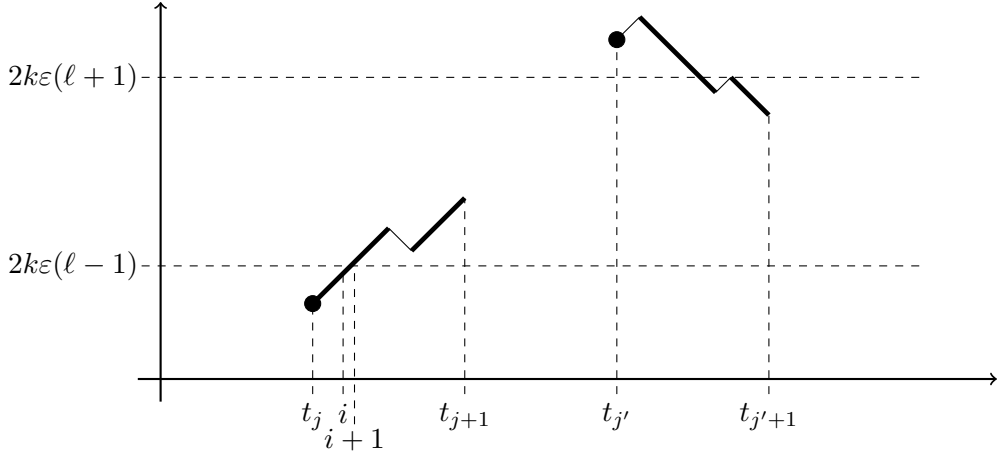
\begin{figure}[h]
	\begin{tikzpicture}[baseline=8ex]
		\draw[fill=black] (2,1) circle (3pt);
		\draw[fill=black] (6,4.5) circle (3pt);
		\node at (-1.15,1.5) {$2k\e(\ell-1)$};
		\node at (-1.15,4) {$2k\e(\ell+1)$};
		\node at (2,-0.5) {$t_j$};
		\node at (2.4,-0.45) {$i$};
		\node at (2.55,-0.8) {$i+1$};
		\node at (4,-0.5) {$t_{j+1}$};
		\node at (6,-0.5) {$t_{j'}$};
		\node at (8,-0.5) {$t_{j'+1}$};
		\draw[->,thick] (-0.3,0)--(11,0);
		\draw[->,thick] (0,-0.3)--(0,5);
		\draw[dashed] (-0.25,1.5) -- (10,1.5);
		\draw[dashed] (-0.25,4) -- (10,4);
		\draw[dashed] (2,-0.2) -- (2,1);
		\draw[dashed] (4,-0.2) -- (4,2.4);
		\draw[dashed] (2.4,-0.2) -- (2.4,1.4);
		\draw[dashed] (2.55,-0.6) -- (2.55,1.55);
		\draw[dashed] (6,-0.2) -- (6,4.5);
		\draw[dashed] (8,-0.2) -- (8,3.5);
		\draw[line width=1.8pt] (2,1) -- (3,2);
		\draw[thin] (3,2) -- (3.3,1.7);
		\draw[line width=1.8pt] (3.3,1.7) -- (4,2.4);
		\draw[thin] (6,4.5) -- (6.3,4.8);
		\draw[line width=1.8pt] (6.3,4.8) -- (7.3,3.8);
		\draw[thin] (7.3,3.8) -- (7.5,4);
		\draw[line width=1.8pt] (7.5,4) -- (8,3.5);

		
	\end{tikzpicture}
	\caption{Sketch for tagged (bold) edges defined by $R$.}\label{fig:exampleGraphicForR}
\end{figure}

Moreover, define
$$
   P\deq\{ i\in R\; : \; (\pi(i+1)-\pi(i))(\pi(i)-\pi(i-1))>0\},
$$
i.e. these are the indices $i\in R$ such that $(i,\pi(i))$ is in the middle of a monotonic segment of length at least 2. In particular, we have the following property:

\begin{lem}[Property {\bf P}]
If  $i\in P$, then both edges of the Dyck path adjacent to the point $(i,\pi(i))$ are tagged.
Moreover, if  $i\in [t_j, t_{j+1})$ for some $j$, then exactly one of the following two options holds:
\begin{itemize}
	\item either: $\pi(t_j) \le 2k\e(\ell -1)$ and $\pi(i-1)<\pi(i)<\pi(i+1)$ 
	\item or: $\pi(t_j)\ge 2k\e (\ell +1)$ and $\pi(i-1)>\pi(i)>\pi(i+1)$. \hfill\qed
\end{itemize}
\end{lem}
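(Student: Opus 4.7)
The plan is a direct unpacking of the definitions of $R$, $P$, and the cutoff heights. Since $i\in P\subseteq R$, fix the unique $j$ with $i\in[t_j,t_{j+1})$. The definition of $R_j$ splits into three cases, and the middle one (where $R_j=\emptyset$) is immediately ruled out because $i\in R_j$. This leaves exactly two regimes: either $\pi(t_j)\le 2k\e(\ell-1)$ with $\pi(i+1)>\pi(i)$, or $\pi(t_j)\ge 2k\e(\ell+1)$ with $\pi(i+1)<\pi(i)$. The defining inequality $(\pi(i+1)-\pi(i))(\pi(i)-\pi(i-1))>0$ of $P$ then forces $\pi(i)-\pi(i-1)$ to carry the same sign as $\pi(i+1)-\pi(i)$, producing exactly the two listed chains $\pi(i-1)<\pi(i)<\pi(i+1)$ and $\pi(i-1)>\pi(i)>\pi(i+1)$. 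This settles the ``moreover'' part.

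For the first claim, the edge from $(i,\pi(i))$ to $(i+1,\pi(i+1))$ is tagged simply because $i\in R$, so it suffices to show that $i-1\in R$, which would tag the other adjacent edge. In the generic case $i>t_j$, the index $i-1$ still lies in $[t_j,t_{j+1})$, i.e., inside the same segment, so the monotonicity chain just established places $i-1$ directly into $R_j$ by the same clause in its definition.

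The boundary case $i=t_j$ is slightly more delicate: now $i-1$ belongs to the previous segment $[t_{j-1},t_j)$ whose tagging rule is governed by the possibly different cutoff height $\pi(t_{j-1})$. Here one invokes the crude displacement bound $|\pi(t_{j-1})-\pi(t_j)|\le 2k\e$ together with the $4k\e$-wide buffer between the two thresholds $2k\e(\ell-1)$ and $2k\e(\ell+1)$: this asymmetric placement is exactly what ensures that $\pi(t_{j-1})$ stays on the same side of the relevant threshold as $\pi(t_j)$, keeping the same tagging rule active across the segment boundary. I expect this to be the main obstacle of the proof; away from the boundary the statement is an immediate reading of the definitions, and the threshold gap makes transparent why the two cutoff values are chosen at $2k\e(\ell\pm 1)$ rather than at a single intermediate height.
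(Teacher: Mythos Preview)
Your unpacking of the ``moreover'' clause and of the generic case $i>t_j$ is correct, and this is essentially all the paper has in mind: the lemma is stated without proof, so it is meant to be read off directly from the definitions of $R$ and $P$, exactly as you do. Once $i\in P\subseteq R_j$ rules out the clause $R_j=\emptyset$, the sign condition defining $P$ forces the monotone chain, and when $i-1$ lies in the same segment $[t_j,t_{j+1})$ the identical clause of $R_j$ tags the left edge too.

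The buffer argument you propose for the boundary case $i=t_j$, however, does not go through. From $\pi(t_j)\le 2k\e(\ell-1)$ and $|\pi(t_{j-1})-\pi(t_j)|\le 2k\e$ you obtain only $\pi(t_{j-1})\le 2k\e\ell$; this keeps $\pi(t_{j-1})$ out of the \emph{opposite} regime $\pi(t_{j-1})\ge 2k\e(\ell+1)$, but not out of the middle band $\big(2k\e(\ell-1),\,2k\e(\ell+1)\big)$ where $R_{j-1}=\emptyset$. Concretely: take $\pi(t_{j-1})$ strictly between $2k\e(\ell-1)$ and $2k\e\ell$, let the path descend so that $\pi(t_j)\le 2k\e(\ell-1)$, and arrange $\pi(t_j-1)<\pi(t_j)<\pi(t_j+1)$; then $i=t_j\in P$ while $i-1\notin R$, and the left edge at $(i,\pi(i))$ is untagged. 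So the first sentence of the lemma, read literally for every $i\in P$, can fail at segment endpoints --- the security layer prevents the previous cutoff height from crossing to the opposite regime, not from landing in the neutral zone. This is harmless for the paper's purposes, since rule~(c) of the chopping procedure, where the property is actually applied, explicitly excludes $i=t_j$; but you should not claim to have established the tagging assertion at the endpoints.
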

This construction (together with the fact that the maximal height difference within each segment $\pi_j$ is at most $2k\e$) implies the following observations:
\begin{itemize}
	\item[(i)] Below a security layer of width $4k\e$ around the fixed level $2k\e\ell$ all up-runs of $\pi$ are tagged, above the security layer all down-runs of $\pi$ are tagged.
	\item[(ii)] At any given level all tagged edges are of the same type (upward or downward).
	\item[(iii)] The choice whether the up-runs or the down-runs are tagged is decided at the cutoff times  $t_j$ and this choice is valid for the entire path segment $\pi_j$.
\end{itemize}

Now we are ready to define the chopped up graph $\Gamma'(\e, \ell)$ that we will actually use.
 
\begin{defi}[Definition of $\Gamma'(\e, \ell)$]\label{def:chopping-up}
Fix $k\in\N$, $\Gamma\in\mathcal T_k$, and let $\pi=\pi(\Gamma)$ be the corresponding Dyck path. We define $\Gamma'(\e, \ell)$ by the following procedure that determines how we split the vertices $V(\Gamma)$:

We walk around $\Gamma$ starting from the root in clockwise direction and we successively mark all edges to be split either completely or almost completely in the leftmost or the rightmost way (but we do not split them yet).
The marking is determined by the following rules:

\underline{\emph{Step 1:}}
We mark the root for leftmost splitting.	

\underline{\emph{Step 2:}}
Now consider the $i$-th step (for $i>0$, the root has been dealt with) and fix $j$ such that $i\in [t_j, t_{j+1})$. Let $v(i) \in V(\Gamma)$ denote the vertex reached at the $i$-th step.
\begin{enumerate}[(a)]
	\item\label{it:tjSplitBot} If $i=t_j$, $\pi(t_j) \le 2k\e(\ell-1)$, and $v(i)$ is visited for the first time\footnote{I.e. there is no $j<i$ with $v(j)=v(i)$.}, then we mark the vertex $v(i)$ for complete splitting.
	\item\label{it:tjSplitTop} If $i=t_j$, $\pi(t_j) \geq 2k\e(\ell+1)$, and $v(i)$ is visited for the last time, then we also mark $v(i)$ for complete splitting.
	\item\label{it:PSplitting} If $i\neq t_j$, $i\in P$, then we mark $v(i)$ for almost complete splitting either in the leftmost or the rightmost way, depending on whether $\pi(t_j) \le 2k\e(\ell-1)$ or $\pi(t_j)\ge 2k\e(\ell+1)$, respectively.
\end{enumerate}

\underline{\emph{Step 3:}} Consider all vertices that were left unmarked\footnote{Note that, in particular, this was the case for $i\in[t_j,t_{j+1})$ such that $2k\e(\ell-1)<\pi (t_j)< 2k\e(\ell+1)$.} in Step 1 and mark them for complete splitting.
Finally, we perform the prescribed splittings.
\end{defi}


\begin{lem}
	The procedure described in Definition \ref{def:chopping-up} is well-defined, i.e. every vertex receives an unambiguous marking.
\end{lem}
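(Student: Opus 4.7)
The plan is to reduce the question to showing that Step 2 assigns at most one marking to each non-root vertex, and to dispose of the few possible collisions by a height argument. Step 1 marks only the root, and the root never triggers Step 2: any intermediate return to the root is V-shaped (down from one child, up to the next), so its index is not in $P$ and hence Step 2(c) does not apply; the terminal return at $i=2k=t_{1/\e}$ has $\pi(2k)=0$, which fails the thresholds of 2(a) and 2(b), and moreover $i=t_j$ there rules out 2(c). Step 3 fills in only the vertices left unmarked by the earlier steps and therefore cannot create a clash on its own. It is thus enough to verify that no non-root vertex receives two distinct marks from Step 2.

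Next I will classify the visits of a non-root vertex $v$. Writing $c(v)$ for its number of children, the clockwise walk visits $v$ at exactly $c(v)+1$ times $i_0<i_1<\ldots<i_{c(v)}$. A glance at the Dyck path shows that intermediate visits ($0<k<c(v)$) are V-shaped, so the corresponding indices fail the monotonicity required in the definition of $P$; in particular Step 2(c) never fires on them. Since $P\subseteq R$ and $R_j$ tags only up-runs in the leftmost regime ($\pi(t_j)\le 2k\e(\ell-1)$) and only down-runs in the rightmost one ($\pi(t_j)\ge 2k\e(\ell+1)$), any $i\in P$ is strictly increasing in $\pi$ in the first case and strictly decreasing in the second. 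Consequently 2(c) in its leftmost form can only mark $v$ at its first visit $i_0$, and 2(c) in its rightmost form only at its last visit $i_{c(v)}$. Together with the fact that 2(a) acts only at first visits and 2(b) only at last visits, each of the two candidate positions $i_0$ and $i_{c(v)}$ (which coincide when $v$ is a leaf, a case where 2(c) is additionally ruled out by the V-shape) can host at most two competing rules.

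To rule out each such collision I will invoke the strict segment bound $|\pi(i)-\pi(t_j)|<2k\e$ valid for every $i\in[t_j,t_{j+1})$, which follows from the step-size-$1$ property and from $i-t_j<2k\e$. Collisions at the same visit (2(a) versus 2(c)--leftmost at $i_0$, and the symmetric 2(b) versus 2(c)--rightmost at $i_{c(v)}$) are impossible because one rule requires $i=t_j$ and the other $i\ne t_j$. Every remaining combination forces $h(v)$ to straddle the middle level $2k\e\ell$: for instance, 2(a) gives $h(v)=\pi(t_j)\le 2k\e(\ell-1)<2k\e\ell$, while 2(c)--rightmost at the last visit gives $h(v)>\pi(t_{j''})-2k\e\ge 2k\e\ell$, a contradiction. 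The analogous argument disposes of 2(b) versus 2(c)--leftmost and of 2(c)--leftmost versus 2(c)--rightmost, while 2(a) versus 2(b) is immediate since $2k\e(\ell-1)<2k\e(\ell+1)$. The only mild technical point is tracking the strict inequality $i-t_j<2k\e$, which is what keeps the borderline height $h(v)=2k\e\ell$ unreachable from both thresholds simultaneously; with this in hand the marking is unambiguous at every vertex.
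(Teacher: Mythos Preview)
Your proof is correct and relies on the same key mechanism as the paper: the height bound $|\pi(i)-\pi(t_j)|<2k\e$ together with the $2\cdot 2k\e$-wide security layer prevents a vertex from being simultaneously marked ``from below'' (rule (a) or (c)--leftmost) and ``from above'' (rule (b) or (c)--rightmost). The paper organizes this slightly differently: it first states observation (ii) (all tagged edges at a given level have the same monotonicity type) and Property~\textbf{P}, and then invokes them to show directly that no two indices $i,i'\in P$ can satisfy $v(i)=v(i')$; the collision of (c) with (a)/(b) is dismissed with a brief ``it is easy to see from the definition of $P$''. Your argument unpacks the same height comparison explicitly and systematically, running through all pairs of rules at the first and last visits, which makes the role of the strict inequality $i-t_j<2k\e$ more transparent. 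One very minor remark: your discussion of the terminal root visit at $i=2k$ is not quite needed, since Step~2 only ranges over $i\in[t_j,t_{j+1})$ with $j\le 1/\e-1$, so $i=2k$ is never considered; and for the intermediate root visits you only rule out (c), but (a)/(b) are trivially excluded there by the first/last-visit clauses.
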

\begin{proof}
The root is always marked for leftmost splitting by Step 1. 
Since the root is visited the first time at $i=0$, rule \eqref{it:tjSplitBot} together with $i>0$ does not mark the root. The conditions of rule \eqref{it:tjSplitTop} also exclude the root since for the root $\pi(i)=0$. Finally, rule \eqref{it:PSplitting} applies only to vertices in the middle of a monotonic segment ($i\in P$), hence it does not apply to the root either, thus Step 1 is not in conflict with Step 2.

Now consider the vertices visited at times $i=t_j$ for some $j$. It is easy to see from the definition of $P$ there is no other time $i'\neq i$ with $v(i')=v(t_j)$ s.t. \eqref{it:PSplitting} marks $v(i')$, hence there is no conflict between \eqref{it:PSplitting} and \eqref{it:tjSplitBot}, \eqref{it:tjSplitTop}.
There is no conflict between \eqref{it:tjSplitBot} and \eqref{it:tjSplitTop} due to the mutually exclusive conditions on $\pi(t_j)$.

It remains to show that rule \eqref{it:PSplitting} is applied to the same vertex $v(i)$ at most once.
When walking around $\Gamma$, the same vertex $v\in V(\Gamma)$ is visited several (even number of) times, say $v(i_1)= v(i_2) =\ldots = v(i_{2m})=v$. 
However, we claim that only at most one of the time indices $i_1, i_2, \ldots, i_{2m}$ can be in $P$ (and if $v$ is the root, then clearly none can be in $P$); in other words, the above procedure triggers a splitting  of $v$ at most at one of the times $i_1, i_2, \ldots, i_{2m}$.
Assuming to the contrary that there exist $i, i'\in P$ with $v(i)=v(i')$ and $i\ne i'$, then by Property P both $(i, \pi(i))$ and $(i', \pi(i'))$ are joining two marked edges of $\pi$ of the same monotonicity type.
By the observation (ii) above, either both are increasing:
$\pi(i-1)<\pi(i)<\pi(i+1)$ and $\pi(i'-1)<\pi(i')<\pi(i'+1)$, or both are decreasing: $\pi(i-1)>\pi(i)>\pi(i+1)$ and $\pi(i'-1)>\pi(i')>\pi(i'+1)$. 
However, the construction of the graph $\Gamma$ from $\pi$ excludes $v(i)=v(i')$ in both cases, which is a contradiction, proving the original claim.
In particular, to every $v$ that is split by rule \eqref{it:PSplitting} (but not by \eqref{it:tjSplitBot} or \eqref{it:tjSplitTop}) along the procedure above, there is a unique time $i=i_v$, when it was split.
\end{proof}
One may also arrive at $\Gamma'(\e,\ell)$ as follows. Split the root almost completely in the leftmost way. Consider any $i>0$ and fix $j=j(i)$ such that $i\in [t_j,t_{j+1})$. We first split almost completely all those vertices $v(i)\in V(\Gamma)$ where $h(t_j)\not\in [2k\e(\ell-1), 2k\e(\ell+1)]$ that are either in an up-run, i.e. $\pi(i-1)<\pi(i)<\pi(i+1)$, in case $\pi(t_j)\leq 2k\e (\ell-1)$ or in a down-run, i.e. $\pi(i-1)>\pi(i)>\pi(i+1)$, in case $\pi(t_j) \geq 2k\e(\ell+1)$.
The corresponding splitting is in the leftmost or the rightmost way, respectively.
Next we split some of these vertices even further, namely those vertices $v(t_j)$ that are in an up-run (if $\pi(t_j)\leq 2k\e(\ell-1)$) and those in a down-run (if $\pi(t_j)\geq 2k\e(\ell+1)$) we split completely.
Finally, in the last step, all unsplit vertices are split completely.


\begin{lem}
Fix $\e,\ell$. Let $\pi\in D_{2k}$ and consider its $j$-th subpath $\pi_j\eqbydef\pi_{[t_j,t_{j+1}]}$. Set
\be\label{eq:defTtilde}
	\wt T_{\ell,\e}(\pi)\deq\sum_{j=0}^{1/\e-1}
		\Big[ U(\pi_j) \cdot {\bf 1}( \pi( t_j)\le 2k\e(\ell-1) ) + D(\pi_j) \cdot {\bf 1}( \pi( t_{j})\ge 2k\e(\ell+1)) \Big].
\ee
Let $\Gamma'(\e,\ell)$ be the chopping-up of the tree $\Gamma=\Gamma(\pi)$ given in Definition \ref{def:chopping-up}. Then we have
\be\label{TT}
	\frac{val(\Gamma)}{\|S\|^k}
	\le z^{\wt T_{\ell,\e}(\pi)}.
\ee
\end{lem}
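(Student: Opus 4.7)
The plan is to combine the monotonicity of $val$ with a precise count of the lengths of the linear components that constitute $\Gamma'(\e,\ell)$. By Lemma \ref{monotonic}, $val(\Gamma)\le val(\Gamma'(\e,\ell))$. Every vertex of $\Gamma$ with at least two children has been split either completely or almost completely in Definition \ref{def:chopping-up}, so $\Gamma'(\e,\ell)$ is a \emph{linear} forest: every connected component is a path rooted at one of its endpoints. For a single such path with $j$ edges, the value factorizes as $\max_{x_0}\sum_{y}(S^j)_{x_0y}$, which is bounded by $\|S^j\|$. Writing $T_j$ for the number of length-$j$ components of $\Gamma'(\e,\ell)$ and using $\sum_j jT_j=k$ together with \eqref{eq:def_zj}, I obtain
\begin{equation*}
  val(\Gamma'(\e,\ell))\le\prod_{j\ge 1}\|S^j\|^{T_j}=\|S\|^k\prod_{j\ge 1}z_j^{T_j}.
\end{equation*}

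The heart of the proof is then to identify $T_j = \wt T_{\ell,\e}(\pi)_j$ for every $j\ge 2$. Since $z_1=1$, this identification immediately promotes the display above to the desired bound \eqref{TT}. The structural observation is that, under the clockwise walk encoding $\Gamma$ as $\pi$, a length-$j$ up-run of $\pi$ corresponds precisely to descending the leftmost-children chain of $\Gamma$ for $j$ steps, and a length-$j$ down-run corresponds to climbing back through $j$ rightmost-children. Consequently, in a low-region segment $\pi_{j'}$ (with $\pi(t_{j'})\le 2k\e(\ell-1)$), the leftmost almost-complete splittings prescribed by rule (c) preserve each up-run as a single length-$j$ component of $\Gamma'(\e,\ell)$; symmetrically, in a high-region segment the rightmost splittings preserve each down-run. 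All other runs---up-runs in high segments, down-runs in low segments, and every edge of a middle-region segment $2k\e(\ell-1)<\pi(t_{j'})<2k\e(\ell+1)$---have their interior vertices fully isolated by the complete splittings of Step 3 and therefore contribute only length-$1$ paths, i.e.\ only to $T_1$.

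The main technical obstacle is the bookkeeping at segment boundaries and at the borders of the three regions: I must ensure that a monotone run is neither spuriously extended nor broken in the wrong place. This is the purpose of rules (a) and (b) of Step 2, which completely split the boundary vertices $v(t_{j'})$. The starting vertex of any interior up-run in a low-region segment is either at a cutoff $i=t_{j'}$, where rule (a) (at first visit) or Step 3 performs a complete splitting, or at a strict local minimum where $(\pi(i+1)-\pi(i))(\pi(i)-\pi(i-1))<0$ forces $i\notin P$ so that Step 3 again completely splits; the ending vertex is automatically a leaf of $\Gamma$ and needs no splitting. If an up-run crosses a cutoff time $t_{j''}$, then $v(t_{j''})$ is being visited for the first time, so rule (a) or Step 3 completely splits it and breaks the up-run exactly at the boundary---matching the segment-wise restriction used to define $U(\pi_{j'})$ and hence $\wt T_{\ell,\e}(\pi)$. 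The dual argument, relying on the last-visit condition in rule (b), handles down-runs in high-region segments. Assembling these cases yields $T_j=\wt T_{\ell,\e}(\pi)_j$ for every $j\ge 2$, and \eqref{TT} follows from the display in the first paragraph.
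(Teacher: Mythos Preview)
Your proposal follows essentially the same approach as the paper: monotonicity of $val$ under chopping, linearity of $\Gamma'(\e,\ell)$, factorisation of the value of a linear forest, and matching the resulting component lengths with the up/down-run statistics $\wt T_{\ell,\e}$.

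One small inaccuracy worth flagging: at a strict local minimum $i$ in a low-region segment you assert that $i\notin P$ forces Step~3 to completely split $v(i)$, but $v(i)$ was typically already marked for \emph{leftmost} splitting at its first visit $i'<i$ (where $i'\in P$), so Step~3 does not apply. Your conclusion is nonetheless correct, since that leftmost splitting still detaches every non-leftmost child---in particular $v(i+1)$---into a freshly rooted component, so the new up-run indeed begins its own linear piece.
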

\begin{proof}
Recalling that by Definition \ref{def:chopping-up} $\Gamma'(\e,\ell)$ is a set of \emph{linear} trees, we have
\be\label{eq:forestbnd}
\frac{val(\Gamma)}{\|S\|^k}
	\leqby{\eqref{eq:monotonic}} \frac{val(\Gamma'(\e, \ell))}{\|S\|^k}
	\leq \prod_{l\in\Gamma'(\e,\ell)} z_{|E(l)|},
\ee
where $|E(l)|$ denotes the number of edges in the linear tree $l$. The second inequality in \eqref{eq:forestbnd} holds since by the definition in \eqref{def:val} the value of a forest is just the product over values of its (tree) components and the value of a linear tree of length $n$ is estimated by $\|S^n\| \eqbydef z_n \|S\|^n$.

The r.h.s. of \eqref{eq:forestbnd} is equal to $z^{\wt T_{\ell,\e}(\pi)}$. To see this, note that rules \eqref{it:tjSplitBot} and \eqref{it:tjSplitTop} allow us to consider the subpaths $\pi_j$ independently of each other (like in the definition of $\wt T_{\ell,\e}$) and rule \eqref{it:PSplitting} ensures we do not ``overcount".
Note that every edge of $\Gamma'(\e,\ell)$ gives rise to exactly two edges of $\pi$ (at the same height). By not overcounting we mean that we need to make sure to use at most one of these two edges in $\wt T_{\ell,\e}$. This is obvious since one of these edges is going up, one is going down and the characteristic functions in \eqref{eq:defTtilde} take only segments going up \emph{or} going down at any given height into account.

\end{proof}

Instead of working with $\wt T_{\ell,\e}$, we would prefer to work with something more tractable without technical restrictions of security layers, similar to what we sketched in Figure \ref{fig:thresholdIllustration}.
To this end we define  the $J$-sequences with a threshold at $2k\e\ell$ without security zone, i.e. we set 
$$
	T_{\ell,\e}(\pi)\deq\sum_{j=0}^{1/\e-1}  T_{\ell,\e}(\pi_j),\qquad
	T_{\ell,\e}(\pi_j)\deq U(\pi_j) \cdot {\bf 1}( \pi( t_j)\le 2k\e\ell ) + D(\pi_j) \cdot {\bf 1}( \pi( t_{j})>2k\e \ell ).
$$
We also define
$$
	\Delta_{\ell, \e}(\pi) \deq \# \{ j\in [0, 1/\e)\; : \;|\pi(t_j)- 2k\e\ell |\le 2k\e\},
$$
the number of cutoff times when the cutoff height is close to the threshold $2k\e\ell$.

We introduce the shorthand notation to denote the expectation w.r.t. the uniform measure on $\mathcal T_k$ $$\E f(\Gamma)\deq \frac{1}{|\mathcal T_k|}\sum_{\Gamma'\in\mathcal T_k}f(\Gamma'),$$ for any $f:\mathcal T_k\to\R$.
We use a similar convention for $f:D_{2k}\to\R$, using the bijection between $D_{2k}$ and $\mathcal T_k$.

Now we quantify at what cost we can consider $z^{T_{\ell,\eps}}$ instead of $z^{\wt T_{\ell,\eps}}$ in \eqref{TT}.
Given $\Gamma\in\mathcal T_k$, its corresponding Dyck path $\pi\in D_{2k}$, $\e>0$, $\ell>1$ integer, we clearly have
\be\label{mainlem}
	\frac{val(\Gamma)}{\| S\|^k}\leq  \frac{ z^{T_{\ell,\e}(\pi)} }{  (\min_{j\le J} z_j^{1/j})^{\Delta_{\ell,\e}(\pi)2k\e} }.
\ee

%
For any fixed $\ell$ there are some $\Gamma$ for which the bound given by \eqref{mainlem} is very bad.
Namely, if most $\pi(t_j)$ are close to $2k\e\ell$, i.e. the path spends a lot of time in the security layer, then $\Delta_{\ell,\e}$ is large and the estimate \eqref{mainlem} is weak.
To prevent this, we will choose the security layer depending on the path in a coarse-grained fashion in the next lemma.



\begin{lem}\label{lem:with2k}
Fix two parameters $L$ and $M$, then
\be\label{with2k}
	\big[ \E \; val(\Gamma)\big]^{1/2k}\le \frac{L^{1/2k}}{  (\min_{j\le J} z_j^{1/j})^{1/L} } \| S\|^{1/2}\cdot
	\max_{m\le L} \big[ \E\; z^{T_{M+2m,\e}(\pi)} \big]^{1/2k}.
\ee
\end{lem}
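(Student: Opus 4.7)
\begin{pf}
\emph{Proof plan.} The approach is to iterate the pointwise bound \eqref{mainlem} over the $L$ candidate thresholds $\ell = M+2m$, $m=1,\ldots,L$, and then use a pigeonhole argument in $m$ to select a threshold whose ``bad factor'' $(\min_{j\le J} z_j^{1/j})^{-2k\e\Delta_{\ell,\e}(\pi)}$ is small.

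Since \eqref{mainlem} holds for every integer $\ell>1$, for each $\pi \in D_{2k}$ we get
\begin{equation*}
\frac{val(\Gamma)}{\|S\|^k} \le \min_{1\le m\le L}\frac{z^{T_{M+2m,\e}(\pi)}}{(\min_{j\le J} z_j^{1/j})^{2k\e\Delta_{M+2m,\e}(\pi)}}.
\end{equation*}
The key structural observation is that the $L$ security windows $[2k\e(M+2m-1),\, 2k\e(M+2m+1)]$ defining $\Delta_{M+2m,\e}$ each have height $4k\e$, while their centres are spaced $4k\e$ apart; hence they are essentially disjoint and every one of the $1/\e$ cutoff times of $\pi$ is counted in at most one of them, so
\begin{equation*}
\sum_{m=1}^L \Delta_{M+2m,\e}(\pi) \le \frac{1}{\e} \qquad \text{for every } \pi.
\end{equation*}
By averaging, there exists $m^*=m^*(\pi)\in\{1,\ldots,L\}$ with $\Delta_{M+2m^*,\e}(\pi)\le 1/(\e L)$, so that $2k\e\Delta_{M+2m^*,\e}(\pi)\le 2k/L$. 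Evaluating the minimum above at $m^*$ yields the pointwise bound
\begin{equation*}
val(\Gamma) \le \|S\|^k \,(\min_{j\le J} z_j^{1/j})^{-2k/L}\, z^{T_{M+2m^*(\pi),\e}(\pi)}.
\end{equation*}

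To remove the $\pi$-dependence of $m^*$ before taking expectation, I would use the elementary bound $z^{T_{M+2m^*(\pi),\e}(\pi)} \le \sum_{m=1}^L z^{T_{M+2m,\e}(\pi)}$ (valid because every summand is nonnegative), then interchange sum and expectation and bound the sum by $L$ times its maximum:
\begin{equation*}
\E\,val(\Gamma) \le \frac{\|S\|^k L}{(\min_{j\le J} z_j^{1/j})^{2k/L}}\,\max_{m\le L}\E\, z^{T_{M+2m,\e}(\pi)}.
\end{equation*}
Taking the $1/(2k)$-th root yields exactly \eqref{with2k}, the exponent $1/L$ arising as $(2k/L)/(2k)$ and the factor $L^{1/(2k)}$ coming from the pigeonhole union bound.

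The only substantive ingredient is the pigeonhole observation; its sharpness depends on the specific choice of spacing $2$ in the parametrisation $\ell=M+2m$, which matches the width of each security window in units of $2k\e$ and thus makes the $L$ candidate windows tile a contiguous height range rather than overlap. Everything else is routine manipulation of \eqref{mainlem} together with linearity of the expectation.
\end{pf}
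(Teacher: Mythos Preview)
Your argument is correct and is essentially identical to the paper's own proof: both use the pigeonhole principle over the $L$ candidate levels $\ell=M+2m$ to find an $m^*(\pi)$ with $\Delta_{M+2m^*,\e}(\pi)\le 1/(\e L)$, then dominate $z^{T_{M+2m^*,\e}(\pi)}$ by the sum over all $m$ before taking expectation and the $2k$-th root. One tiny quibble: the closed windows $[2k\e(\ell-1),2k\e(\ell+1)]$ for consecutive $\ell=M+2m$ share their endpoints, so a cutoff height landing exactly on such a boundary is counted in two windows, not one; thus strictly $\sum_m \Delta_{M+2m,\e}(\pi)\le 2/\e$ rather than $1/\e$. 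This is immaterial (the paper glosses over it too, and the eventual limit $L\to\infty$ kills any constant), but you may want to soften ``at most one'' to ``at most two'' or simply note that the windows overlap only on a set of measure zero.
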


\begin{proof}
By the pigeonhole principle, for any fixed $\pi\in D_{2k}$, there exists an $\ell=\ell(\pi)$ of the form $\ell = M+2m$ with $m\in \{1, 2, \ldots, L\}$
such that
$$
	\Delta_{\ell,\e}(\pi) \le \frac{1}{\e L}.
$$
Choosing this $\ell=\ell(\pi)$ in the estimate \eqref{mainlem}, we obtain
$$
	\frac{val(\Gamma)}{\| S\|^k}\le  \frac{ z^{T_{\ell(\pi),\e}(\pi)} }{  (\min_{j\le J} z_j^{1/j})^{2k/L} }.
$$
Summing up for all possible values of $\ell(\pi)$, we obtain the following bound:
$$
    \frac{val(\Gamma)}{\| S\|^k}\le   \frac{1}{  (\min_{j\le J} z_j^{1/j})^{2k/L} } \sum_{m=1}^L  z^{T_{M+2m,\e}(\pi)} .
$$
Now we take expectation $\E$ 
and take the $2k$-th root, to finish the proof.
\end{proof} 

We will take the limits in the following order
$$
  \lim_{L,M}\lim_\e \limsup_k
$$
which makes the prefactor in \eqref{with2k} one, hence negligible.
So it is sufficient to estimate 
\be\label{eq:suffToEstimate_zTl}
	\lim_\e \limsup_k \E \; z^{T_{\ell,\e}(\pi)} = \lim_\e \limsup_k \E \; \prod_{j=0}^{1/\e-1} z^{ T_{\ell,\e}(\pi_j)}
\ee
for $\ell = M+2m$ fixed.



\section{Distribution of Dyck Paths}
The uniform measure on the set of Dyck paths is equivalent to an inhomogeneous Markov chain that we describe now.

Fix $k$ and note that because of the boundary conditions no Dyck path can leave the triangle $$\Delta_k\deq \{(t,h)\in\N^2: h\leq t\text{ and } h\leq 2k-t\}.$$
Introduce furthermore
\begin{align*}
	\Delta_{k,\delta}\deq \Delta_k\cap\{(t,h):t+2k\e\leq 2k(1-\delta)\}\quad\quad
	& \Delta_{k,\delta}^{top}\deq \Delta_{k,\delta}\cap\{(t,h):h>2k\e\ell\}\\
	& \Delta_{k,\delta}^{bot}\deq \Delta_{k,\delta}\cap\{(t,h):h\leq 2k\e\ell\}
\end{align*}
as illustrated in Figure \ref{fig:transitionProbLandscape}.
For any $(t,h)\in\Delta_k$, let $$p_{t,h}\deq \P\left(\pi(t+1)=h+1|\pi(t)=h\right)$$
be the conditional probability, w.r.t. $\P$, the uniform measure on $D_{2k}$, that the path goes up at time $t$ and height $h$. In \cite{AS80} (Eq. (4)) it is shown that
\be\label{eq:arnoldTransitionProbabilities}
	p_{t,h}=\frac{1}{2}\frac{h+2}{h+1}\frac{2k-t-h}{2k-t}.
\ee

The Markov property allows us to consider parts of the path (of length $2k\e$, say) separately. We will encode these subpaths by their increments. To this end introduce $\Omega = \{+1,-1\}^{2k\e}$ and equip it with the natural $\sigma$-algebra $\mathcal F = 2^\Omega$. 

Fix $(t,h)\in\Delta_k$ such that $t+2k\e\leq 2k$, and an $i\in[0,2k\e]$ integer.
Define for $\omega=(\omega(i))_{i=1}^{2k\e}$ the \emph{absolute height at relative time} $i$ as $$h_i^{abs}=h_i^{abs}(\omega)\deq h+\sum_{j=1}^{i} \omega(j),$$ and the \emph{absolute time} $t_i^{abs} \deq t+i$. 
Furthermore, for $i\in[0,2k\e)$ let
\begin{align*}
	p_{i+1}=p_{i+1}(\omega)\deq \frac{1}{2}\frac{h_i^{abs}+2}{h_i^{abs}+1}\frac{2k-t_i^{abs}-h_i^{abs}}{2k-t_i^{abs}},\quad \quad
&p_{i+1}^{top}= p_{i+1}^{top}(\omega)\deq\frac{1}{2}\frac{2k-t_i^{abs}-h_i^{abs}}{2k-t_i^{abs}},\\
&p_{i+1}^{bot}= p_{i+1}^{bot}(\omega)\deq \frac{1}{2}\frac{h_i^{abs}+2}{h_i^{abs}+1},
\end{align*}
as well as 
\begin{align*}
	\Pi_{t,h}\deq \Pi_{t,h}^{top}\cap\Pi_{t,h}^{bot},\quad\quad
	&\Pi_{t,h}^{top}\deq \{\omega\in\Omega : h_i^{abs}(\omega)\leq 2k-t_i^{abs},\;\forall i\in[0,2k\e)\},\\
	&\Pi_{t,h}^{bot}\deq \{\omega\in\Omega : h_i^{abs}(\omega)\geq 0,\;\forall i\in[0,2k\e)\}.
\end{align*}
The set $\Pi_{t,h}$ encodes the paths of length $2k\e$ that are legitimate continuations as a Dyck path starting from $(t,h)\in\Delta_k$.
On $\Pi_{t,h}\subseteq\Omega$ we define the probability measure $\P_{(t,h)}$ defined by
\be\label{eq:defP_th}
	\P_{(t,h)}(\omega) \deq \prod_{i=1}^{2k\e} (p_i(\omega))^{{\bf 1}(\omega(i)=1)}(1-p_i(\omega))^{{\bf 1}(\omega(i)=-1)}.
\ee
Similarly, on $\Pi_{t,h}^{top}$ and $\Pi_{t,h}^{bot}$ we define the probability measures $\P^{top}_{(t,h)}$ and $\P^{bot}_{(t,h)}$ with $p_i$ replaced by $p_i^{top}$ and $p_i^{bot}$ in \eqref{eq:defP_th}, respectively.
We naturally extend the probability measures $\P_{(t,h)}, \P_{(t,h)}^{top}, \P_{(t,h)}^{bot}$ to the entire measure space $(\Omega,\mathcal F)$ by setting them zero for $\omega$ not in $\Pi_{t,h},\Pi_{t,h}^{top},$ and $\Pi_{t,h}^{bot}$, respectively.

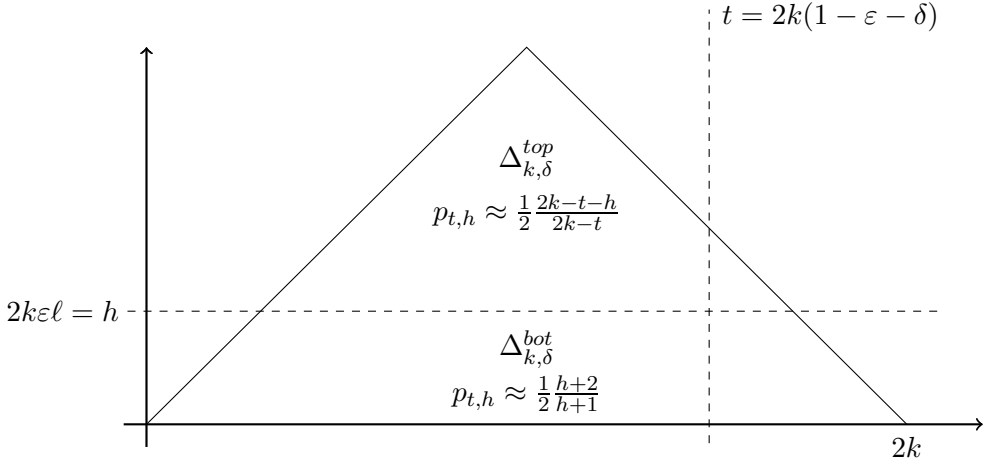
\begin{figure}[h]
	\begin{tikzpicture}[baseline=8ex]
		\node at (-1.1,1.5) {$2k\e\ell=h$};
		\node at (10,-0.3) {$2k$};
		\node at (9,5.4) {$t=2k(1-\e-\delta)$};
		\node at (5,2.8) {$p_{t,h}\approx \frac{1}{2}\frac{2k-t-h}{2k-t}$};
		\node at (5,3.5) {$\Delta_{k,\delta}^{top}$};
		\node at (5,0.4) {$p_{t,h}\approx \frac{1}{2}\frac{h+2}{h+1}$};
		\node at (5,1) {$\Delta_{k,\delta}^{bot}$};
		
		\draw[->,thick] (-0.3,0)--(11,0);
		\draw[->,thick] (0,-0.3)--(0,5);
		
		\draw[thin] (0,0) -- (5,5) -- (10,0);
		
		\draw[dashed] (-0.25,1.5) -- (10.5,1.5);
		\draw[dashed] (7.4,-0.25) -- (7.4,5.5);
		
		\draw[->,thin] (0,-0.3)--(0,5);
	\end{tikzpicture}
	\caption{Landscape of the transition probability.}\label{fig:transitionProbLandscape}
\end{figure}
The result \eqref{eq:arnoldTransitionProbabilities} shows that $p_i(\omega)$ is the transition probability (under the uniform distribution on $D_{2k}$) that the path starting at $(t,h)$ goes up in the $i$-th step after $t$, conditioned that it passed through at $(t,h)$ and its evolution between times $t$ and $t+i-1$ was given by $(\omega_1,\omega_2,\dots,\omega_{i-1})$.

Define the random variable $\pi^h:\omega\mapsto (h_i^{abs}(\omega))_{i=0}^{2k\e}$.\footnote{Since $\pi^h$ is a bijection for every fixed $h$ we will sometimes abuse notation to use $\omega$ and paths of length $2k\e$ (most prominently the subpaths $\pi_j\eqbydef\pi_{[t_j,t_{j+1}]}$) interchangeably.} Then clearly for every fixed $\omega\in\Omega$ we have that $\P_{(t,h)}(\omega)$ is the probability of $\{\tilde\pi_{[t,t+2k\e)}=\pi^h(\omega)\}$ for $\tilde\pi$ being sampled from the uniform distribution on $D_{2k}$, conditioned on $\{\tilde\pi(t)=h\}$, i.e.
$$\P_{(t,h)}(\omega) = \P\left(\tilde\pi_{[t,t+2k\e)}=\pi^h(\omega) \;|\; \tilde\pi(t)=h\right) \quad\quad\quad \forall\omega\in\Omega \text{ fixed}.$$
Writing $\E_{(t,h)}$ for the expectation under $\P_{(t,h)}$, we make use of the Markov property to write the last term on the r.h.s. of \eqref{with2k} as 

\be\label{eq:nestingOfExpectations}\E \; z^{T_{\ell,\e}(\pi)} =
\E \prod_{j=0}^{1/\e-1} z^{ T_{\ell,\e}(\pi_j)} =
\E_{(0,0)}\left[
	z^{T_{\ell,\e}(\pi_0)}
	\E_{(t_1,\pi_0(t_1))}\left[
		z^{T_{\ell,\e}(\pi_1)}\E_{(t_2,\pi_1(t_1))}[\dots]\right]\right],\ee
where $\ell = M+2m$. We also recall the convention that $\pi_j(i)=\pi(t_j+i)$ for all integers $i\in[0,2k\e]$, i.e. we start to count the time variable from zero for every resampled path.
In the following we will find an upper bound on $\E_{(t_j,h)}z^{T_{\ell,\e}(\pi_j)}$ independent of $j$ or $h$.
In fact, we bound $\E_{(t,h)}z^{T_{\ell,\e}(\pi)}$ for any path $\pi=\pi^h$ starting from $h$. Recall that $T_{\ell,\e}(\pi) = U(\pi)$ or $T_{\ell,\e}(\pi) = D(\pi)$, counting up-runs or down-runs, depending on whether $\pi^h(0)=h\leq 2k\e\ell$ or $h>2k\e\ell$, respectively.

\subsection{Bound by Simple Random Walk}
Let $\E_\mu$ be the expectation under a time-homogeneous random walk starting from $0$ and with probability $\mu$ of going up and $1-\mu$ of going down.
In particular, $\E_\frac{1}{2}$ refers to the expectation under the simple random walk.

\begin{lem}\label{lem:boundOnCondExp}
	Fix some (large) integers $\ell\geq 2$ and $k$, and (small) $\e,\delta>0$ such that $\e/\delta<(\ell+2)^{-1}$. Then for all $(t,h)\in\Delta_{k,\delta}$ we have
	$$\E_{(t,h)}z^{T_{\ell,\e}(\pi^h)} \le (1+\eta(k,\e,\delta))^{2k\e}\E_\frac{1}{2}z^{U(\pi^h)}$$
	with some error term $\eta(k,\e,\delta)=\eta_{t,h,\ell}(k,\e,\delta)$ that satisfies \be\label{eq:etaBound}
		|\eta(k,\e,\delta)|\leq \left((\ell+1)\frac{\e}{\delta} + \frac{1}{k\e}\right).
	\ee
	In particular, for any fixed $\ell$, we have the following limit uniformly in $t$ and $h$: $$\lim_{\e,\delta\to 0: \atop \e/\delta\to 0}\limsup_{k\to\infty}\eta(k,\e,\delta)=0.$$
\end{lem}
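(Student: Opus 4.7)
The plan is to prove the bottom case ($h\le 2k\e\ell$, $T=U$) in full; the top case ($h>2k\e\ell$, $T=D$) will follow by an analogous argument. The strategy is to factor $d\P_{(t,h)}/d\P_{1/2}$ through the intermediate measure $\P^{bot}_{(t,h)}$ (the Doob $h$-transform of $\P_{1/2}$ by $h(x)=x+1$, equivalently the simple random walk conditioned to stay nonnegative) and then invoke the FKG inequality on the hypercube $\{-1,+1\}^{2k\e}$ to compare $\E^{bot}_{(t,h)}$ with $\E_{1/2}$.

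First, a direct telescoping, using that both $\frac{h_i^{abs}+2}{h_i^{abs}+1}$ (appearing in $2p_i$ on up-steps) and $\frac{h_i^{abs}}{h_i^{abs}+1}$ (appearing in $2(1-p_i)$ on down-steps) equal $(h_{i+1}^{abs}+1)/(h_i^{abs}+1)$, will show
$$
\frac{d\P^{bot}_{(t,h)}}{d\P_{1/2}}(\omega)=\frac{h_{2k\e}^{abs}+1}{h+1}{\bf 1}_{\Pi^{bot}_{t,h}}\qquad\text{and}\qquad
\frac{d\P_{(t,h)}}{d\P^{bot}_{(t,h)}}(\omega)=\prod_{i=0}^{2k\e-1}R_i(\omega)\cdot {\bf 1}_{\Pi^{top}_{t,h}},
$$
where $R_i=1-h_i^{abs}/(2k-t_i^{abs})$ on up-steps and $R_i=1+(h_i^{abs}+2)/(2k-t_i^{abs})$ on down-steps. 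In the bottom region, the hypothesis $\e/\delta<(\ell+2)^{-1}$ prevents the path from reaching the upper boundary within $2k\e$ steps, so $h_i^{abs}\le h+2k\e\le 2k\e(\ell+1)$, while $2k-t_i^{abs}\ge 2k\delta$ on $\Delta_{k,\delta}$. Therefore $|R_i-1|\le(\ell+1)\e/\delta+1/(k\delta)$, and I will bound $\prod_i R_i\le(1+\eta)^{2k\e}$ with $\eta$ as claimed (using $\delta\ge\e$). Multiplying by $z^{U(\omega)}\ge 0$ and integrating gives $\E_{(t,h)}[z^U]\le(1+\eta)^{2k\e}\E^{bot}_{(t,h)}[z^U]$.

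The technical heart is the correlation inequality $\E^{bot}_{(t,h)}[z^U]\le\E_{1/2}[z^U]$, which I plan to derive from FKG. Equip $\{-1,+1\}^{2k\e}$ with the coordinatewise order; the product measure $\P_{1/2}$ is FKG trivially. The density $M=\frac{h_{2k\e}^{abs}+1}{h+1}{\bf 1}_{\Pi^{bot}_{t,h}}$ is nondecreasing in $\omega$, since replacing a $-1$ by a $+1$ raises the path and preserves the nonnegativity constraint. The weight $z^{U(\omega)}=\prod_j z_j^{U_j(\omega)}$ is nonincreasing in $\omega$: flipping one $\omega_i$ from $-1$ to $+1$ either creates a new up-run of length one (factor $z_1=1$), extends an existing up-run of length $a$ (factor $z_{a+1}/z_a\le 1$, since $z_j$ is nonincreasing in $j$), or merges two up-runs of lengths $a$ and $b$ into one of length $a+b+1$ (factor $z_{a+b+1}/(z_az_b)\le 1$, by the log-submultiplicativity $\|S^{a+b+1}\|\le\|S^a\|\|S^b\|\|S\|$ inherited from the matrix norm). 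FKG applied to functions of opposite monotonicity, together with the ballot/Kemperman identity $\E_{1/2}[M]=1$, then yields the desired inequality.

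For the top case, I will use $\P^{top}_{(t,h)}$ (the simple random walk from $(t,h)$ conditioned on $\pi(2k)=0$, ignoring positivity). A telescoping in $\eta_i:=2k-t_i^{abs}+h_i^{abs}$, which is constant on up-steps and decreases by $2$ on down-steps, will give
$$
\frac{d\P_{(t,h)}}{d\P^{top}_{(t,h)}}(\omega)=\frac{h_{2k\e}^{abs}+1}{h+1}\cdot\frac{2k-t+h+2}{2k-t-2k\e+h_{2k\e}^{abs}+2}
$$
on $\Pi_{t,h}$, with both factors controllable in the top region by $1+O(\e/\delta+1/(k\e))$. The FKG step is symmetric: $z^D$ is nondecreasing in $\omega$ (by the $\omega\mapsto-\omega$ symmetry applied to the previous argument), while $d\P^{top}_{(t,h)}/d\P_{1/2}$ is proportional to the lattice-path count $N(t+2k\e,h_{2k\e}^{abs};2k,0)$, which is nonincreasing in $h_{2k\e}^{abs}>0$ by the unimodality of binomial coefficients, and hence nonincreasing in $\omega$ throughout the top region. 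I expect the main obstacle to be the FKG step: a naive Radon--Nikodym bound is insufficient because $\frac{h_{2k\e}^{abs}+1}{h+1}$ is not uniformly close to $1$; it is precisely FKG, anchored on the submultiplicativity of $S$, that couples the Dyck-path geometry to the weights $(z_j)$.
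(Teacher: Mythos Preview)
Your proposal is correct and follows essentially the same two-step architecture as the paper: first control $d\P_{(t,h)}/d\widehat\P_{(t,h)}$ pointwise to extract the $(1+\eta)^{2k\e}$ factor (this is the paper's Lemma on the time-homogeneous approximation, and your telescoping computation of the $R_i$'s reproduces it), then compare $\widehat\E_{(t,h)}$ with $\E_{1/2}$ via a correlation inequality, using the submultiplicativity of $\|S^j\|$ to establish the required monotonicity of $z^U$ (resp.\ $z^D$).

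The one genuine difference is in the correlation step. The paper invokes Holley's inequality with $\mu_1=\P_{1/2}$, $\mu_2=\P^{bot}_{(t,h)}$ (and the reversed roles in the top case), checking condition~(1) by the explicit formula $\P^{bot}_{(t,h)}(\omega)=2^{-2k\e}(h+1+\Delta\omega)/(h+1)$ in the bottom case and by a somewhat laborious telescoping-product estimate in the top case. You instead stay on the product measure $\P_{1/2}$ and write $\E^{bot}[z^U]=\E_{1/2}[M\,z^U]$ with $M=d\P^{bot}/d\P_{1/2}$ increasing, so the Harris--FKG inequality for product measures gives the conclusion directly once $\E_{1/2}[M]=1$; in the top case you identify $d\P^{top}/d\P_{1/2}$ with a (normalized) lattice-path count and get its monotonicity from the unimodality of binomial coefficients. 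Since one of the two measures is always the product measure, Holley and FKG are equivalent here, but your formulation is a bit cleaner, especially in the top case where it replaces the paper's explicit inequality chain by a one-line combinatorial observation. One small caveat: in the top case your claim that each of the two factors in $d\P_{(t,h)}/d\P^{top}_{(t,h)}$ is $1+O(\e/\delta+1/(k\e))$ is not literally true for the first factor $(h_{2k\e}^{abs}+1)/(h+1)$, which is only $1+O(1/\ell)$; but the \emph{product} of the two factors is still comfortably below $(1+\eta)^{2k\e}$ (since already $(1+1/(k\e))^{2k\e}\ge 3$), so the stated bound goes through.
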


The intuition behind this estimate is that, for $h$ below some threshold $2k\e\ell$ and $t$ away from the endpoint $2k$ the probability measure $\P_{(t,h)}(\cdot)$ favours going up and hence we expect longer up-runs than in, say, a simple random walk. This is an effect of the repulsive boundary condition at $0$ that forces $\pi(i)\geq 0$ for all $i$. Longer up-runs clearly give us smaller $z^{U(\pi^h)}$ and in that regime $T_{\ell,\e}$ counts the up-runs. Similarly, for the region where $h>2k\e\ell$, the measure $\P_{(t,h)}(\cdot)$ favours going down due to the constraint $\pi(2k)=0$ at the endpoint and in this regime $T_{\ell,\e}$ counts the down-runs.
Note that the distribution of $U(\pi^h)$ and $D(\pi^h)$ are the same for the simple random walk, hence $\E_\frac{1}{2}z^{U(\pi^h)} = \E_\frac{1}{2}z^{D(\pi^h)}$.

To formalise this intuition we recall Holley's inequality from \cite{G06} (Theorems 2.1 and 2.6, as well as the remark after the statement of Theorem 2.1):

\begin{lem}[Holley's inequality]
	Let $\Omega \deq \{-1,1\}^E$ for some finite $E$ and $\mathcal F=2^\Omega$ be the discrete $\sigma$-algebra. Consider the partial order $\leq$ on $\Omega$, given by $\omega\leq\omega'$ iff $\omega(e)\leq\omega'(e)$ for all $e\in E$.
	
	Let $X$ be an increasing random variable from the measure space $(\Omega,\mathcal F)$ to $\R$, i.e. $X(\omega)\leq X(\omega')$ for any $\omega\leq \omega'\in\Omega$. Let $\mu_1,\mu_2$ be probability measures on $(\Omega,\mathcal F)$ satisfying
	\begin{enumerate}
			\item $\mu_1(\omega^e)\mu_2(\omega_e)\leq \mu_1(\omega_e)\mu_2(\omega^e)$, and
			\item $\mu(\omega_{ef})\mu(\omega^{ef})\geq \mu(\omega^e_f)\mu(\omega_e^f)$ for $\mu=\mu_1$ \underline{or} $\mu=\mu_2$,
	\end{enumerate}
	where $\omega^e$ and $\omega_e$ are defined by $\omega^e(i)=\omega_e(i)\deq\omega(i)$ if $i\neq e$ and $\omega^e(i)\deq 1$, $\omega_e(i)\deq -1$ if $i=e$. Furthermore we set\footnote{The order indicated by the bracket breaks the symmetry; one may have defined $\omega_f^e \deq (\omega^e)_f$. However, this notation only occurs in the combination $\mu(\omega_f^e)\mu(\omega_e^f)$ which is independent of this choice.} $\omega^e_f\deq (\omega_e)^f$, $\omega_{ef}\deq(\omega_e)_f$, and $\omega^{ef}\deq(\omega^e)^f$.
	Then we have $$\mu_1(X)\leq \mu_2(X).\quad\qed$$
\end{lem}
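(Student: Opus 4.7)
The plan is to prove Holley's inequality by constructing a monotone coupling of two single-site (heat-bath) Glauber dynamics whose unique stationary distributions are $\mu_1$ and $\mu_2$, respectively. Since the state space $\Omega=\{-1,1\}^E$ is finite, I would first reduce to the strictly positive case by a standard perturbation argument (replace $\mu_i$ by $(1-\varepsilon)\mu_i+\varepsilon\nu$ for $\nu$ uniform, verify that conditions (1) and (2) are stable under this perturbation, and take $\varepsilon\downarrow 0$ at the end); positivity ensures that all conditional probabilities below are well defined and that the dynamics are irreducible.

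For each $i\in\{1,2\}$, I would define a continuous-time Markov chain on $\Omega$ in which each site $e\in E$ rings at rate one, and upon ringing we resample $\omega(e)$ from its $\mu_i$-conditional distribution given $\omega|_{E\setminus e}$. The up-probability for this resampling is
$$ q_i(e,\omega) \deq \frac{\mu_i(\omega^e)}{\mu_i(\omega^e)+\mu_i(\omega_e)}, $$
which depends on $\omega$ only through $\omega|_{E\setminus e}$. This heat-bath dynamics is reversible with respect to $\mu_i$ and, by irreducibility, has $\mu_i$ as its unique invariant distribution.

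Now I would couple the two chains on a common probability space using the same Poisson clocks and, at each ring at site $e$, a single uniform random variable $U\in(0,1)$: the new value of coordinate $e$ in chain $i$ is set to $+1$ if $U\le q_i(e,\omega_i)$ and to $-1$ otherwise. Starting from any pair $\omega_1^0\le\omega_2^0$, I claim the coupling preserves the partial order, i.e.\ $\omega_1^t\le\omega_2^t$ for all $t\ge 0$. It suffices to verify that $q_1(e,\omega_1)\le q_2(e,\omega_2)$ whenever $\omega_1\le\omega_2$ agree off $e$. This I would establish in two steps: (a) hypothesis (1) applied pointwise at $\omega_1$ yields directly $q_1(e,\omega_1)\le q_2(e,\omega_1)$; (b) the FKG lattice condition (2) (say for $\mu_2$) is equivalent, via a short algebraic manipulation, to the statement that $q_2(e,\cdot)$ is monotone nondecreasing in the configuration on $E\setminus\{e\}$, giving $q_2(e,\omega_1)\le q_2(e,\omega_2)$. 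Since the hypothesis only requires (2) for one of the two measures, one could alternatively run (b) first (with $\mu_1$) and then (a) at $\omega_2$; either route works.

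With the monotone coupling in hand, the conclusion is immediate: since $X$ is increasing, for every $t\ge 0$
$$ \E\bigl[X(\omega_1^t)\bigr] \le \E\bigl[X(\omega_2^t)\bigr], $$
and letting $t\to\infty$, ergodicity of each marginal chain sends the left-hand side to $\mu_1(X)$ and the right-hand side to $\mu_2(X)$. The main obstacle I anticipate is step (b): translating the lattice condition (2) into monotonicity of the heat-bath flip probability in the external configuration. This is a short but essential computation — essentially the content of the FKG theorem — and is the only place where hypothesis (2) enters; everything else is the standard monotone coupling bookkeeping.
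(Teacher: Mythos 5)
Your proposal is correct and is essentially the standard proof of Holley's inequality via a monotone coupling of heat-bath Glauber dynamics, which is precisely the argument in the reference \cite{G06} that the paper cites for this lemma (the paper itself states the result without proof). The two hypotheses enter exactly where you say: condition (1) gives $q_1(e,\omega)\le q_2(e,\omega)$ pointwise by a direct cross-multiplication, and condition (2) for one of the measures gives monotonicity of the corresponding $q_i(e,\cdot)$ in the off-$e$ configuration (by the single-site cross-multiplication $\mu(\sigma_f^e)\mu(\sigma_e^f)\le\mu(\sigma^{ef})\mu(\sigma_{ef})$), after which the asymmetric combination you describe closes the monotone coupling.
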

\noindent To apply Holley's inequality in the proof of Lemma \ref{lem:boundOnCondExp} we would like to approximate the measure $\P_{(t,h)}$ by a simpler Markov chain.
We define 
\be\widehat\P_{(t,h)}(\omega)\deq \begin{cases}
	\P_{(t,h)}^{top}(\omega), & \text{if } h > 2k\e\ell \\
	\P_{(t,h)}^{bot}(\omega), & \text{if } h\leq 2k\e\ell
\end{cases}\ee
and let $\widehat\E_{(t,h)}$ denote the expectation w.r.t. $\widehat\P_{(t,h)}$.

\begin{lem}\label{lem:timeHomApprox}
	Fix some (large) integers $\ell\geq 2$ and $k$, some (small) $\e,\delta>0$ such that $\e/\delta<(\ell+2)^{-1}$, and $(t,h)\in\Delta_{k,\delta}$. 
	Then we have \be\label{eq:timeHomApprox}
	\E_{(t,h)}z^{T_{\ell,\e}}\leq (1+\eta(k,\e,\delta))^{2k\e}\widehat\E_{(t,h)}z^{T_{\ell,\e}},
	\ee
	where the error term $\eta(k,\e,\delta) = \eta_{t,h,\ell}(k,\e,\delta)$ is chosen to be the same as in Lemma \ref{lem:boundOnCondExp}; in particular the bound \eqref{eq:etaBound} holds.
\end{lem}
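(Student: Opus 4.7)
The plan is to prove \eqref{eq:timeHomApprox} by a trajectory-wise comparison of the step weights of $\P_{(t,h)}$ and $\widehat\P_{(t,h)}$. Both measures are products of $2k\e$ one-step Bernoulli weights $q_i(\omega),\hat q_i(\omega)$; depending on whether $\omega(i)=+1$ or $-1$, $q_i$ equals $p_i$ or $1-p_i$, while $\hat q_i$ is the analogous expression with $p_i$ replaced by $p_i^{bot}$ (in the bottom region $h\le 2k\e\ell$) or $p_i^{top}$ (in the top region $h>2k\e\ell$). Since $z^{T_{\ell,\e}}\ge 0$, it suffices to find $\eta$ meeting \eqref{eq:etaBound} with the pointwise bound $q_i(\omega)\le (1+\eta)\hat q_i(\omega)$ for every $i\in[1,2k\e]$ and every $\omega\in\Pi_{t,h}$. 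Multiplying these inequalities gives $\P_{(t,h)}(\omega)\le (1+\eta)^{2k\e}\widehat\P_{(t,h)}(\omega)$ on $\Pi_{t,h}$; outside $\Pi_{t,h}$ the left-hand side vanishes; and since $\Pi_{t,h}$ is contained in the support of $\widehat\P_{(t,h)}$, enlarging the right-hand sum to $\widehat\E_{(t,h)}z^{T_{\ell,\e}}$ yields \eqref{eq:timeHomApprox}.

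In the bottom region I would exploit the factorization
\[
p_i=p_i^{bot}\cdot\frac{2k-t_{i-1}^{abs}-h_{i-1}^{abs}}{2k-t_{i-1}^{abs}},
\]
which immediately gives $p_i/p_i^{bot}\le 1$, handling the up-steps. For the down-steps, writing $1-p_i=(1-p_i^{bot})+p_i^{bot}\cdot h_{i-1}^{abs}/(2k-t_{i-1}^{abs})$ together with the identity $p_i^{bot}/(1-p_i^{bot})=(h_{i-1}^{abs}+2)/h_{i-1}^{abs}$ yields
\[
\frac{1-p_i}{1-p_i^{bot}}\;=\;1+\frac{h_{i-1}^{abs}+2}{2k-t_{i-1}^{abs}}.
\]
The condition $(t,h)\in\Delta_{k,\delta}$ forces $2k-t_{i-1}^{abs}\ge 2k\delta$, while $\omega\in\Pi_{t,h}$ and $h\le 2k\e\ell$ force $h_{i-1}^{abs}\le h+2k\e\le 2k\e(\ell+1)$. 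Hence the fraction is at most $(\ell+1)\e/\delta+1/(k\delta)\le (\ell+1)\e/\delta+1/(k\e)$, the last inequality using $\delta>\e$ (which follows from $\e/\delta<(\ell+2)^{-1}$). This matches \eqref{eq:etaBound}.

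The top region $h>2k\e\ell$ is handled symmetrically: here $p_i/p_i^{top}=(h_{i-1}^{abs}+2)/(h_{i-1}^{abs}+1)=1+1/(h_{i-1}^{abs}+1)$ and $(1-p_i)/(1-p_i^{top})\le 1$. For $\omega\in\Pi_{t,h}$ and $\ell\ge 2$, $h_{i-1}^{abs}\ge h-2k\e\ge 2k\e(\ell-1)+1\ge 2k\e$, so the up-step ratio is at most $1+1/(2k\e+1)\le 1+1/(k\e)$, again within \eqref{eq:etaBound}. The main obstacle I anticipate is purely bookkeeping: controlling the per-step ratio uniformly in $i$, which relies on the drift-free bound $|h_{i-1}^{abs}-h|\le 2k\e$ combined with $(t,h)\in\Delta_{k,\delta}$ and $\ell\ge 2$. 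No stochastic domination argument (e.g.\ Holley's inequality) is required; Lemma \ref{lem:timeHomApprox} reduces to an explicit analytic comparison of Bernoulli weights.
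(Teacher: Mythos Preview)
Your proof is correct and follows the same route as the paper's: establish the pointwise inequality $\P_{(t,h)}(\omega)\le(1+\eta)^{2k\e}\widehat\P_{(t,h)}(\omega)$ by bounding each one-step Bernoulli ratio, using only the constraints $|h_{i-1}^{abs}-h|\le 2k\e$ and $2k-t_{i-1}^{abs}\ge 2k\delta$ coming from $(t,h)\in\Delta_{k,\delta}$. You are in fact slightly more explicit than the paper, which only displays the bounds on $|1-p_i/p_i^{top}|$ and $|1-p_i/p_i^{bot}|$ and leaves the down-step ratios $(1-p_i)/(1-\widehat p_i)$ implicit; your separate treatment of up- and down-steps in each region makes this step transparent.
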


The bound \eqref{eq:timeHomApprox} controls the measure $\P_{(t,h)}$ by a simpler measure $\widehat\P_{(t,h)}$. We could have made an approximation with a Markov chain with constant transition rates (on scale of $2k\eps$) in the regime that is far away from the boundary of $\Delta_k$.
This possibility is indicated in Figure \ref{fig:transitionProbLandscape}, but we will not need it in our proof.
%
\begin{proof}[Proof of Lemma \ref{lem:timeHomApprox}]
	We show that for all $(t,h)\in \Delta_{k,\delta}$ and all $\omega\in\Omega$, we have 
	\be\label{eq:PhatApprox}
		\P_{(t,h)}(\omega) \leq (1+\eta(k,\e,\delta))^{2k\e}\widehat\P_{(t,h)}(\omega),
	\ee
	then \eqref{eq:timeHomApprox} will follow.
	To see \eqref{eq:PhatApprox}, it clearly suffices to show that both
	$$\max_{1\le i \le 2k\e} \left|1-\frac{p_i(\omega)}{p_i^{top}(\omega)}\right|\quad\quad\text{and}\quad\quad \max_{1\le i \le 2k\e} \left|1-\frac{p_i(\omega)}{p_i^{bot}(\omega)}\right|$$
	satisfy the same bound \eqref{eq:etaBound} as $\eta(k,\e,\delta)$ does, uniformly for all $(t,h)$ in $\Delta_{k,\delta}^{top}$ and $\Delta_{k,\delta}^{bot}$, respectively, as well as uniformly for all $\omega\in\Omega$.
	
	Elementary calculations using the assumptions on $(t,h)$, in particular $t\leq 2k(1-\delta-\e)$, and the fact that $|h-h_i^{abs}(\omega)|\leq 2k\e$ as well as $0\leq t_i^{abs}-t\leq 2k\e$ give
	
	$$0\leq \frac{p_i(\omega)}{p_i^{top}(\omega)}-1 = \frac{1}{h_i^{abs}+1}\leq \frac{1}{2k\e(\ell-1)+1}\leq \frac{1}{k\e},\quad\quad\text{for }\ell\geq 2$$
	as well as
	$$0\leq
	1-\frac{p_i(\omega)}{p_i^{bot}(\omega)} =
	\frac{h_i^{abs}}{2k-t_i^{abs}}\leq 
	(\ell+1)\e\frac{1}{1-\e-\frac{t}{2k}}\leq 
	(\ell+1)\frac{\e}{\delta}.$$
	Noting that these bounds do not depend on $t,h,i$ or $\omega$ we have proven the claim.
\end{proof}

Before we prove Lemma \ref{lem:boundOnCondExp} we also show that the time-homogeneous approximation $\P^{bot}_{(t,h)}(\omega)$ can be calculated explicitly in terms of the ``relative height difference" of the path induced by $\omega$. 
\begin{lem}\label{lem:explicitFormulaForTimeHomApproxInA}
	Fix $(t,h)\in\Delta_k$ and $\omega\in\Omega=\{+1,-1\}^{2k\e}$. Let $\Delta\omega\deq \sum_{i=1}^{2k\e}\omega(i)$ be the relative height difference of the path $\pi^h(\omega)$. Then we have
	$$\P_{(t,h)}^{bot}(\omega)\eqbydef
		\prod_{i=1}^{2k\e}
			\left(p_i^{bot}(\omega)\right)^{{\bf 1}(\omega(i)=1)}
			\left(1-p_i^{bot}(\omega)\right)^{{\bf 1}(\omega(i)=-1)}
		= \left(\frac{1}{2}\right)^{2k\e}\frac{h+1+\Delta\omega}{h+1}.$$
\end{lem}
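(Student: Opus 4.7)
The strategy is to exhibit $\P^{bot}_{(t,h)}(\omega)$ as a telescoping product. The key observation is that, although $p_i^{bot}(\omega)$ and $1-p_i^{bot}(\omega)$ look like two different quantities, once one introduces the auxiliary variable $a_i \deq h_{i-1}^{abs}(\omega)+1$ (the ``height plus one'' at the beginning of step $i$), both factors admit a common form in terms of the ratio $a_{i+1}/a_i$.

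Concretely, from the definition $p_{i+1}^{bot} = \frac{1}{2}\frac{h_i^{abs}+2}{h_i^{abs}+1}$ one computes
\[
	p_i^{bot}(\omega) = \frac{a_i+1}{2a_i},
	\qquad
	1 - p_i^{bot}(\omega) = \frac{a_i-1}{2a_i}.
\]
If $\omega(i)=+1$ the height increases by one, so $a_{i+1}=a_i+1$ and hence $p_i^{bot}(\omega) = \frac{a_{i+1}}{2a_i}$; if $\omega(i)=-1$ the height decreases by one, so $a_{i+1}=a_i-1$ and $1-p_i^{bot}(\omega) = \frac{a_{i+1}}{2a_i}$. In either case the factor appearing at step $i$ in the product defining $\P^{bot}_{(t,h)}(\omega)$ equals $\frac{a_{i+1}}{2a_i}$.

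Therefore
\[
	\P_{(t,h)}^{bot}(\omega) = \prod_{i=1}^{2k\e}\frac{a_{i+1}}{2a_i} = \left(\frac{1}{2}\right)^{2k\e}\frac{a_{2k\e+1}}{a_1} = \left(\frac{1}{2}\right)^{2k\e}\frac{h_{2k\e}^{abs}(\omega)+1}{h_0^{abs}(\omega)+1},
\]
and using $h_0^{abs}(\omega)=h$ and $h_{2k\e}^{abs}(\omega)=h+\Delta\omega$ yields the claimed formula.

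There is no genuine obstacle here; the only substantive step is spotting the uniform expression $\frac{a_{i+1}}{2a_i}$ that works for both signs of $\omega(i)$, after which the computation is a one-line telescoping. One minor bookkeeping point worth noting is that the formula is meaningful on $\Pi_{t,h}^{bot}$, where $h+\Delta\omega\ge 0$ (so that no zero factor $a_i$ is encountered); outside this set $\P_{(t,h)}^{bot}$ was defined to vanish, consistent with the right-hand side becoming a weight only on admissible $\omega$.
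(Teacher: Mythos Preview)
Your proof is correct and follows essentially the same approach as the paper: both arguments recognise that the step-$i$ factor can be written uniformly as $\frac{h_{i}^{abs}+1}{2(h_{i-1}^{abs}+1)}$ (your $\frac{a_{i+1}}{2a_i}$) regardless of the sign of $\omega(i)$, and then telescope. Your added remark about the formula being meaningful on $\Pi_{t,h}^{bot}$ is a careful touch that the paper omits.
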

\begin{proof}
Recall that by definition we have $h_0^{abs}=h, h_1^{abs}=h_0^{abs}+\omega(1),\dots$. 
Hence
\begin{align*}
	2^{2k\e}\P_{(t,h)}^{bot}
	&\eqbydef 
		\prod_{i=0}^{2k\e-1} 
			\left(\frac{h_i^{abs}+2}{h_i^{abs}+1}\right)^{{\bf 1}(\omega(i+1)=1)}
			\left(\frac{h_i^{abs}}{h_i^{abs}+1}\right)^{{\bf 1}(\omega(i+1)=-1)}\\
	&=
		\prod_{i=0}^{2k\e-1} 
			\frac{h_i^{abs}+\omega(i+1)+1}{h_i^{abs}+1}\\
	&=
		\frac{h_{2k\e}+1}{h+1},
\end{align*}
where the last step followed by noting that $h_{i+1}^{abs}=h_i^{abs}+\omega(i+1)$ and a telescoping product argument. Since $h_{2k\e}=h+\Delta\omega$ this proves the claim.
\end{proof}

\begin{proof}[Proof of Lemma \ref{lem:boundOnCondExp}]	
	To use Holley's inequality we consider $\Omega = \{+1,-1\}^{2k\e}= \{(,)\}^{2k\e}$, where every $\omega\in\Omega$ is naturally identified with a part of a Dyck path $\pi^h$ via its bracket notation. This induces a partial ordering on the set of subpaths of length $2k\e$ and allows for $\omega^e$ or $\omega_e$ to be interpreted as the increments of the path where the $e$-th step is replaced with an up or down, respectively.
	Then $\omega\leq\omega'$ if and only if $\omega_i\leq\omega'_i$ at every position $i$, i.e. if and only if $\pi(\omega)$ goes down every time $\pi(\omega')$ goes down.
	Furthermore let $\P_\frac{1}{2}$ be the probability measure corresponding to $\E_\frac{1}{2}$, i.e. $$\P_\frac{1}{2}(\omega)\eqbydef 2^{-2k\e}.$$
	Now fix $(t,h)\in\Delta_{k,\delta}$. By Lemma \ref{lem:timeHomApprox} it suffices to show that \be\label{eq:suffCondToCheckWithHolley}\widehat\E_{(t,h)}z^{T_{\ell,\e}(\pi)}\leq \E_\frac{1}{2}z^{U(\pi)}.\ee
	By definition of $T_{\ell,\e}$ and $\widehat\P_{(t,h)}$ we deal with two different cases depending on $h$.
	
	\underline{\textbf{Case: $h\le 2k\e\ell$.}}	
	In this regime we have, by definition of $T_{\ell,\e}$, that $z^{T_{\ell,\e}} = z^U$ and $\widehat\P_{(t,h)}(\omega)\eqbydef \P_{(t,h)}^{bot}(\omega)$.
	Now set\footnote{Note that $U(\pi)$ is independent of the initial height $h=\pi^h(0)=\pi(0)$, hence we will suppress $h$ in the notation.} $$X(\omega)\deq -z^{U(\pi(\omega))},$$
	and we claim that $X$ is increasing. Indeed, this easily follows by induction (for every pair $\omega\leq\omega'$ introduce a sequence $\omega=\omega^{(0)}\le\omega^{(1)}\le\omega^{(2)}\le\dots\le\omega^{(I)}=\omega'$ such that $\omega^{(i)}=\omega^{(i+1)}_{e_i}$ for some $e_i$), using the fact that $z^{U(\pi(\omega^e))}\le z^{U(\pi(\omega_e))}$. This inequality is a consequence of the submultiplicativity of the norm and the definition of $z_j$ in \eqref{eq:def_zj}.
	
	To get \eqref{eq:suffCondToCheckWithHolley} we will apply Holley's inequality to $X$ and $\mu_1=\P_\frac{1}{2},\mu_2=\P_{(t,h)}^{bot}$. Hence it suffices to check that
	\begin{enumerate}
		\item $\P_\frac{1}{2}(\omega^e)\P_{(t,h)}^{bot}(\omega_e)\leq \P_\frac{1}{2}(\omega_e)\P_{(t,h)}^{bot}(\omega^e)$, and
		\item $\mu(\omega_{ef})\mu(\omega^{ef})\geq \mu(\omega^e_f)\mu(\omega_e^f)$ for $\mu=\P_\frac{1}{2}$ or $\mu=\P_{(t,h)}^{bot}$.
	\end{enumerate}
	
	To check condition (1), notice that $\P_\frac{1}{2}(\omega_e)=\P_\frac{1}{2}(\omega^e)$, so it suffices to prove 
	\be\label{eq:ratioOfApproxLeq1}
		\frac{\P_{(t,h)}^{bot}(\omega_e)}{\P_{(t,h)}^{bot}(\omega^e)}\leq 1.
	\ee 
	Without loss of generality we may assume $e=1$ since changing the $e$-th entry in $\omega$ does not change the contribution from the first $e-1$ terms in the product in $$\P_{(t,h)}^{bot}(\omega)\eqbydef\prod_{i=1}^{2k\e}\left(p_i^{bot}(\omega)\right)^{{\bf 1}(\omega(i)=1)}\left(1-p_i^{bot}(\omega)\right)^{{\bf 1}(\omega(i)=-1)}.$$
	Now pick any $\omega=\omega^e$, notice that $\Delta\omega^e = \Delta\omega_e + 2$, and apply Lemma \ref{lem:explicitFormulaForTimeHomApproxInA} to see that \eqref{eq:ratioOfApproxLeq1} holds and hence condition (1) in Holley's inequality is satisfied.
			
	Condition (2) is trivially fulfilled by choosing $\mu=\P_\frac{1}{2}$ for which we have equality.
	Hence Holley's inequality in this setup gives \eqref{eq:suffCondToCheckWithHolley} for $h\leq 2k\e\ell$.
	
	\underline{\textbf{Case: $h>2k\e\ell$.}}	
	Similarly to the previous case we now apply Holley's inequality to $X\deq z^{D(\pi)}$, $\mu_1 = \widehat\P_{(t,h)}(\omega)\eqbydef\P_{(t,h)}^{top}$, and $\mu_2 = \P_\frac{1}{2}$.
	As before, $X$ is increasing 
	and condition (2) of Holley's inequality is trivially fulfilled by $\mu_2 = \P_\frac{1}{2}$. To show condition (1), i.e.
	$$\P_{(t,h)}^{top}(\omega^e)\P_\frac{1}{2}(\omega_e)\leq \P_{(t,h)}^{top}(\omega_e)\P_\frac{1}{2}(\omega^e) 
	\quad\Leftrightarrow\quad
	\P_{(t,h)}^{top}(\omega^e)\leq \P_{(t,h)}^{top}(\omega_e),$$
	we fix any $e\in[1,2k\e)$ and consider for some $\omega=\omega^e$ the ratio
	\be\label{eq:ratioCond1ForHolleyInTopCase}
		\frac{\P^{top}_{(t,h)}(\omega^e)}{\P^{top}_{(t,h)}(\omega_e)} =
		\underbrace{\frac{q_{t_{e-1}^{abs},h_{e-1}^{abs}}}{1-q_{t_{e-1}^{abs},h_{e-1}^{abs}}}}_{\leq 1}\prod_{i=e}^{2k\e-1}
			{\underbrace{\left(\frac{q_{t_i^{abs},h_i^{abs}}}{q_{t_i^{abs},h_i^{abs}-2}}\right)}_{\leq 1}}^{{\bf 1}(\omega(i)=1)}
			{\underbrace{\left(\frac{1-q_{t_i^{abs},h_i^{abs}}}{1-q_{t_i^{abs},h_i^{abs}-2}}\right)}_{\geq 1}}^{{\bf 1}(\omega(i)=-1)},
	\ee
	where 
	\be\label{eq:defOfqs}
		q_{t,h}\deq \frac{1}{2}\frac{2k-t-h}{2k-t}
		\quad\quad
		1-q_{t,h} = \frac{1}{2}\frac{2k-t+h}{2k-t}.
	\ee	
	Note that $h_i^{abs}\eqbydef h_i^{abs}(\omega)=h_i^{abs}(\omega^e)$ since we assumed $\omega=\omega^e$ 
	and $h_i^{abs}(\omega^e)=h_i^{abs}(\omega_e)+2\cdot{\bf 1}(i \geq e)$.
	
	Now we show that \eqref{eq:ratioCond1ForHolleyInTopCase} is less or equal than $1$ for all choices of $e$ and $\omega$. Since the first factor in the product in \eqref{eq:ratioCond1ForHolleyInTopCase} is less or equal than $1$ we can bound
	\begin{align*}
		\frac{\P^{top}_{(t,h)}(\omega^e)}{\P^{top}_{(t,h)}(\omega_e)} 
		&\leq \frac{q_{t_{e-1}^{abs},h_{e-1}^{abs}}}{1-q_{t_{e-1}^{abs},h_{e-1}^{abs}}}
			\prod_{i=e}^{2k\e-1}
			\left(\frac{1-q_{t_i^{abs},h_i^{abs}(\omega)}}{1-q_{t_i^{abs},h_i^{abs}(\omega)-2}}\right)^{{\bf 1}(\omega(i)=-1)}\\
		&\leq \Bigg(\max_{j\in[0,2k\e),\atop \tilde\omega\in\Omega}\frac{q_{t_{j}^{abs},h_{j}^{abs}(\tilde\omega)}}{1-q_{t_{j}^{abs},h_{j}^{abs}(\tilde\omega)}}\Bigg)
			\prod_{i=1}^{2k\e-1}
			\frac{1-q_{t_i^{abs},h_i^{abs}(\omega')}}{1-q_{t_i^{abs},h_i^{abs}(\omega')-2}}.\numberthis\label{eq:maximisethis}
	\end{align*}
	where $\omega'\deq(1,-1,-1,\dots,-1)$. The second inequality holds because setting $e=1$ in the product gives more factors that are greater or equal to one and because all factors for $i\geq e$ can be directly compared (for different $\omega$) using
	$$1\leq \frac{1-q_{t,h+c}}{1-q_{t,h-2+c}}\leq \frac{1-q_{t,h}}{1-q_{t,h-2}},\quad\quad \text{for all } c\geq 0, \;(t,h)\in\Delta_k.$$
	Since $q\mapsto\frac{q}{1-q}$ is monotonically increasing for $0\leq q<1$, and $(t,h)\mapsto q_{t,h}$ is monotonically decreasing in both variables, we have
	$$\max_{j\in[0,2k\e),\atop \tilde\omega\in\Omega}\frac{q_{t_{j}^{abs},h_{j}^{abs}(\tilde\omega)}}{1-q_{t_{j}^{abs},h_{j}^{abs}(\tilde\omega)}}
	\leq \max_{\Delta t\in[0,m],\atop\Delta h\in[-m,m]}\frac{q_{t+\Delta t,h+\Delta h}}{1-q_{t+\Delta t,h+\Delta h}}
	= \frac{q_{t,h-m}}{1-q_{t,h-m}}
	= \frac{2k-t-h+m}{2k-t+h-m}
	$$
	for $m\deq 2k\e-1$.
	To show that \eqref{eq:maximisethis} is less or equal than $1$, it suffices (using the formulas \eqref{eq:defOfqs} and $t_i^{abs}\eqbydef t+i,h_i^{abs}(\omega')\eqbydef h-i+2$) to see that 
	$$
		\frac{2k-t-h+m}{2k-t+h-m}\prod_{i=1}^m \frac{2k-t+h+2-2i}{2k-t+h-2i} \leq 1.
	$$
	This is easy to see by a telescoping product argument and using that $h\geq 2m$ (since $4k\e\leq 2k\e\ell<h$ by assumption).
	Thus $\P_{(t,h)}^{top}(\omega^e)\leq\P_{(t,h)}^{top}(\omega_e)$ so that condition (1) of Holley's inequality is satisfied.
	Hence we can apply Holley's inequality to get \eqref{eq:suffCondToCheckWithHolley} for $h>2k\e\ell$.
	This completes the proof of the lemma.
\end{proof}
\subsection{Calculating the $\limsup$}
Now we are ready to prove the main result:

\begin{proof}[Proof of Theorem \ref{thm:main}]
Start with \eqref{with2k} and note that the expectation on the r.h.s. can be written as in \eqref{eq:nestingOfExpectations}. We apply the trivial bound $z^{T_{\ell,\e}(\pi)}\leq 1$ for the first $\delta/\e$ innermost terms in \eqref{eq:nestingOfExpectations} and Lemma \ref{lem:boundOnCondExp} $\frac{1}{\e}(1-\delta)$ times to the remaining ones, yielding

\begin{align*}
	&\big[ \E \; val(\Gamma)\big]^{1/2k} \\&\le 
	\frac{L^{1/2k}}{  (\min_{j\le J} z_j^{1/j})^{1/L} }  \| S\|^{1/2}\cdot
   \max_{(t,h)\in\Delta_{k,\delta}\atop \ell\in[M,M+2L]}
   \left[ \prod_{j=0}^{\frac{1-\delta}{\e}-1}\left((1+\eta_{t,h,\ell}(k,\e,\delta))^{2k\e}\E_\frac{1}{2}z^{U(\pi_j)}\right) \right]^{1/2k}\\
   &=\frac{L^{1/2k}}{  (\min_{j\le J} z_j^{1/j})^{1/L} }  \| S\|^{1/2}\cdot
   \left(1+(M+2L+1)\frac{\e}{\delta}+\frac{1}{k\e}\right)^{1-\delta}\big[ \E_\frac{1}{2}z^{U(\pi_0)} \big]^{\frac{1-\delta}{2k\e}},
\end{align*}
where $\pi_0$, the random variable over which we are taking expectation, is a path of length $2k\e$. There we used that since $\ell = M+2m$ is between $M$ and $M+2L$, we have $\eta_{t,h,\ell}(k,\e,\delta)\leq (M+2L+1)\frac{\e}{\delta}+\frac{1}{k\e}$ from \eqref{eq:etaBound}, uniformly in $t,h$.

After taking limits in the following order
$$
	\lim_{L,M\to\infty}\lim_{\e,\delta\to 0: \atop \e/\delta\to 0} \limsup_{k\to\infty},
$$
using that $\lim_k |\mathcal T_k|^{1/2k}=2$ and the change of variables $n=2k\e$ 
we have for $\pi^{(n)}$ a simple random walk of length $n$, as in \eqref{eq:suppRhoNbddByLimsupValN}:
\be\label{eq:limsupbnd}
\max\supp\rho 
= \limsup_k\big[ |\mathcal T_k|\cdot\E \; val(\Gamma)\big]^\frac{1}{2k}
\leq 2\|S\|^\frac{1}{2}\limsup_{n\to\infty}\left(\E_\frac{1}{2}z^{U(\pi^{(n)})}\right)^\frac{1}{n}.
\ee
To estimate $(\E_\frac{1}{2}z^{U(\pi^{(n)})})^\frac{1}{n}$ for large $n$, we introduce a randomised stopping time $n^*$ with geometric distribution $\P(n^*=m)=w^{m-1}(1-w)$, where $w$ is a new parameter to be optimised later. Denote the expectation over $n^*$ by $\E^*$. We set $U^* \deq U(\pi^{(n^*-1)})$.
Following Theorem 2 in \cite{HK14} we find that for $J\in\N$ we have
\be\label{eq:explosiveCoinIdentity}
\E^*\E_\frac{1}{2}z^{U^*} = (1-w)\frac{1+\sum_{j=1}^J \left(\frac{w}{2}\right)^j z_j + \sum_{j>J}\left(\frac{w}{2}\right)^j}{1-\frac{w}{2}(1+\sum_{j=1}^J\left(\frac{w}{2}\right)^jz_j+\sum_{j>J}\left(\frac{w}{2}\right)^j)},
\ee
as well as $$\E^*\E_\frac{1}{2}z^{U^*} = (1-w)\sum_{n\ge 0}w^n\E_\frac{1}{2} z^{U(\pi^{(n)})},$$
as in equation (10) in \cite{HK14}. Interpreting $\E_\frac{1}{2} z^{U(\pi^{(n)})}$ as the coefficients of the power series (in $w$) of $\E^*\E_\frac{1}{2}z^{U^*}$, it suffices to find (the inverse of) its radius of convergence to get the $\limsup$ in \eqref{eq:limsupbnd} by Cauchy-Hadamard.

Considering the explicit formula \eqref{eq:explosiveCoinIdentity}, note that this radius of convergence is equal to the minimum of $2$ and $w_c$ being defined as the smallest (in absolute value) root of the denominator in \eqref{eq:explosiveCoinIdentity}, i.e. the function $\phi_J$ defined in \eqref{delphi}.
It is easy to see that the smallest (in absolute value) root of $\phi_J$ is positive and smaller than $2$.
Thus $\limsup_{n\to\infty}\left(\E_\frac{1}{2}z^{U(\pi^{(n)})}\right)^\frac{1}{n} = \frac{1}{w_c}$, proving Theorem \ref{thm:main}.
\end{proof}

\appendix
\section{Numerics}
For $N=500$, $J=50$, and $S_{ij}\deq e^{\frac{i+j}{N}}$ the trivial bound $2\|S\|^{1/2}\approx 4.316$ and the empirical average (number of samples = 10) of the largest eigenvalue (in absolute value) is $\approx 3.677$ (with empirical standard deviation of $\approx 0.047$). Our method improves the trivial bound to $\approx 3.870$, a factor of improvement of $w_c \approx 1.115$.

\end{document}